\DeclareMathAlphabet{\mathcal}{OMS}{cmsy}{m}{n} 
\newcommand{\comment}[1]{}
\newcommand{\BEA}{\begin{eqnarray}}
\newcommand{\EEA}{\end{eqnarray}}
\newcommand{\BR}{\mathbb{R}}
\newtheorem{thm}{Theorem}[section]
\newtheorem{prop}[thm]{Proposition}
\newtheorem{lem}[thm]{Lemma}
\newcommand{\PreserveBackslash}[1]{\let\temp=\\#1\let\\=\temp}
\newcolumntype{C}[1]{>{\PreserveBackslash\centering}p{#1}}
\newcolumntype{R}[1]{>{\PreserveBackslash\raggedleft}p{#1}}
\newcolumntype{L}[1]{>{\PreserveBackslash\raggedright}p{#1}}
\newcommand{\stkout}[1]{\ifmmode\text{\sout{\ensuremath{#1}}}\else\sout{#1}\fi}
\begin{document}

\title{Solving forward and inverse PDE problems on unknown manifolds via physics-informed neural operators}
\author[1]{Anran Jiao}
\author[2]{Qile Yan}
\author[3]{John Harlim}
\author[1,4,*]{Lu Lu}

\affil[1]{Department of Statistics and Data Science, Yale University, New Haven, CT 06511, USA}
\affil[2]{School of Mathematics, University of Minnesota, 206 Church St SE, Minneapolis, MN 55455, USA}
\affil[3]{Department of Mathematics, Department of Meteorology and Atmospheric
Science \\
Institute for Computational and Data Sciences \\
The Pennsylvania State University, University Park, PA 16802, USA}
\affil[4]{Wu Tsai Institute, Yale University, New Haven, CT 06510, USA}

\affil[*]{Corresponding author. Email: lu.lu@yale.edu}

\maketitle

\begin{abstract}
In this paper, we evaluate the effectiveness of deep operator networks (DeepONets) in solving both forward and inverse problems of partial differential equations (PDEs) on unknown manifolds. By unknown manifolds, we identify the manifold by a set of randomly sampled data point clouds that are assumed to lie on or close to the manifold. When the loss function incorporates the physics, resulting in the so-called physics-informed DeepONets (PI-DeepONets), we approximate the differentiation terms in the PDE by an appropriate operator approximation scheme. For the second-order elliptic PDE with a nontrivial diffusion coefficient, we approximate the differentiation term with one of these methods: the Diffusion Maps (DM), the Radial Basis Functions (RBF), and the Generalized Moving Least Squares (GMLS) methods. For the GMLS approximation, which is more flexible for problems with boundary conditions, we derive the theoretical error bound induced by the approximate differentiation. Numerically, we found that DeepONet is accurate for various types of diffusion coefficients, including linear, exponential, piecewise linear, and quadratic functions, for linear and semi-linear PDEs with/without boundaries. When the number of observations is small, PI-DeepONet trained with sufficiently large samples of PDE constraints produces more accurate approximations than DeepONet. For the inverse problem, we incorporate PI-DeepONet in a Bayesian Markov Chain Monte Carlo (MCMC) framework to estimate the diffusion coefficient from noisy solutions of the PDEs measured at a finite number of point cloud data. Numerically, we found that PI-DeepONet provides accurate approximations comparable to those obtained by a more expensive method that directly solves the PDE on the proposed diffusion coefficient in each MCMC iteration.
\end{abstract}

\keywords{Partial Differential Equation, Manifold, Deep Operator Network, Diffusion Map, Radial Basis Function, Generalized Moving Least Squares method, Bayesian Inverse Problem}

\lhead{}

\section{Introduction}

Solving partial differential equations (PDEs) on manifolds is crucial across various fields including the natural sciences and practical engineering. For example, in image processing, PDEs on surfaces have been used in image segmentation \cite{tian2009segmentation}, image inpainting \cite{shi2017weighted}, and restoration of damaged patterns \cite{bertalmio2001navier,macdonald2010implicit}. In computer graphics, applications include flow field visualization \cite{bertalmio2001variational}, surface reconstruction \cite{zhao2001fast}, and brain imaging \cite{memoli2004implicit}. In physics, such a problem arises in granular flow \cite{rauter2018finite} and phase ordering \cite{schoenborn1999kinetics} on surfaces, and liquid crystals on deformable surfaces \cite{nitschke2020liquid}. With such diverse applications, many numerical methods have been developed to solve PDEs on manifolds. For example, the surface finite element method (FEM) \cite{dziuk2007surface,dziuk2013finite} is a robust and efficient method when a triangular mesh is given on surface. However, when the manifold is identified by randomly sampled data point clouds, the triangular mesh can be difficult to obtain. To address such an issue, mesh-free approaches were developed. For example, several collocation methods have been developed, including the global Radial Basis Function methods \cite{piret2012orthogonal,fuselier2013high} and the RBF-generated finite difference (FD) methods \cite{shankar2015radial,lehto2017radial}. In these approaches, they first employ a manifold parametrization scheme, such as local SVD, level set methods \cite{bertalmio2001variational,greer2006improvement,xu2003eulerian}, closest point methods \cite{ruuth2008simple,petras2018rbf}, and orthogonal gradient methods \cite{piret2012orthogonal}, and subsequently approximate the surface differentiation along the approximate tangent bundle. Another class of mesh-free approach is to identify a regression solution to the PDE by employing the Generalized Moving Least-Squares (GMLS) \cite{liang2013solving,suchde2019meshfree,gross2020meshfree,jiang2024generalized} to approximate tangential derivatives on the point cloud data. Alternatively, graph-based approaches, including Graph Laplacian, Diffusion Maps, and Weighted Nonlocal Laplacian \cite{li2016convergent,li2017point,gh2019,jiang2020ghost,yan2022ghost}, do not require a parameterization of a point cloud and can handle randomly sampled data on high-dimensional manifolds, although limited to a certain class of differential operators.

While many mesh-free solvers are available on manifolds as listed above, the size of the resulting discrete approximation of the differential operator increases as a function of the size of the point cloud data. Beyond the storage issue, it can become a computational bottleneck when applying these solvers in inverse problem algorithms for parameter estimation in PDEs \cite{harlim2020kernel,harlim2022graph}. Whether using a maximum likelihood or Bayesian approach in the inversion method, the numerical algorithm typically requires an iterative procedure. In this procedure, the predicted observations corresponding to the proposed parameter value at the current iteration are compared to the measured observations. When the solution operator that maps the parameter to be determined to the PDE solution is not explicitly available, the computational cost of each iteration is dominated by the complexity of the PDE solver. 


Recently, deep learning techniques have been utilized for solving PDEs as a more efficient solver in the field of scientific machine learning (SciML)~\cite{karniadakis2021physics}. Physics-informed neural networks (PINNs) have been developed by embedding the PDE loss into the loss function of neural networks~\cite{raissi2019pinn,lu2021deepxde,lu2021physics,pang2019fpinns,zhang2019quantifying,yu2022gradient,wang2023learning} and successfully applied for solving forward as well as inverse problems of PDEs across various fields \cite{chen2020physics,yazdani2020systems,daneker2023systems,wu2023effective,wu2024identifying,hao2023pinnacle,fan2023deep,daneker2024transfer}. In this spirit, an algorithm based on neural networks along with Diffusion Maps was proposed to solve elliptic PDEs on unknown manifolds with constraints on the PDE and boundary conditions \cite{liang2024solving}. Despite the advancements, this class of approaches is not numerically suitable for solving parameter estimation problems since it requires one to retrain the neural network model to find the PDE solution subjected to the new proposed parameter at each iteration. Alternatively, recent developments of deep neural operators to learn PDE solution operators by using neural networks overcome this limitation, such as deep operator network (DeepONet) \cite{lu2021learning, wang2021learning, jin2022mionet, lu2022multifidelity,zhu2023reliable,moya2024conformalized,jiang2023fourier}, Fourier neural operator \cite{li2020fourier, lu2022comprehensive, zhu2023fourier}, and other neural operator learning methods \cite{cai2021deepm,mao2021deepm,tripura2023wavelet, jiao2021one, liu2024multi, cao2023residual,yin2024dimon, zhang2024d2no}. 
This approach accelerates the online prediction of PDE solutions under varying conditions, such as different coefficients or boundary conditions~\cite{lin2021operator,di2023neural,mao2023ppdonet}, by an offline training of deep neural operators.

In this paper, we propose to learn a solution operator of elliptic PDEs on unknown manifolds using the DeepONet and physics-informed DeepONet (PI-DeepONet). Furthermore, we integrate the learned PI-DeepONet into a Bayesian Markov Chain Monte Carlo (MCMC) framework for solving inverse problems. Specifically, our contributions are summarized below.
\begin{enumerate}
    \item We employ DeepONets to learn the solution operator of an elliptic PDE on unknown manifolds from varying diffusion coefficients to PDE solutions. The effectiveness of our methods is demonstrated on different types of diffusion coefficients, linear/semi-linear PDEs, and torus/semi-torus manifolds with/without boundary conditions.
    \item We develop PI-DeepONets to further improve the performance, in which we incorporate physics and approximate the differentiation term with one of these methods: the Diffusion Maps (DM), the Radial Basis Functions (RBF), and the Generalized Moving Least Squares (GMLS) methods.
    \item We discuss the error induced by numerical approximations of the PDE solution and differential operators on unknown manifolds.
    \item We integrate PI-DeepONet into a Bayesian MCMC framework to infer the diffusion coefficient from noisy solutions of the PDEs for comparable accuracy and significant reduction of computational costs.
\end{enumerate}

Our paper is organized as follows. In Section~\ref{sec:method}, we describe the PDE problem setup and introduce DeepONet and PI-DeepONet. Then we introduce the error induced by the approximate loss function and derive the error bounds in Section~\ref{sec:error}. In Section~\ref{sec:experiments}, we present the numerical experiments for solving forward problems. Finally, we present the Bayesian approach to inverse problems and the numerical performance in Section~\ref{sec:inverse_torus}.

\section{Methodology}
\label{sec:method}



To simplify the discussion, we consider finding the solution operator, $ \mathcal{G}: \kappa \mapsto u$, corresponding to a PDE problem,
\begin{equation}
\begin{aligned}
-\text{div}_g(\kappa \textup{grad}_g u)+c u&=f\quad\text{on}\ M,\\\label{eq1}
\mathcal{B}(u) &= 0, \quad\text{on}\ \partial M.
\end{aligned}
\end{equation}
Here, $M$ is a $d$-dimensional compact submanifold of Euclidean space $\mathbb{R}^n$ with the boundary $\partial M$.  The differential operators are defined with respect to the Riemannian metric $g$, the parameter $c:M\to\BR$ is a positive function, $\kappa:M \to \BR$ is a positive diffusion coefficient, and $f$ is a known real-valued function defined on $M$. Here, $\mathcal{B}$ denotes the operator corresponding to the boundary conditions, e.g., Dirichlet and Neumann conditions.  

In our setup, we consider the manifold to be unknown in the sense that we have no access to either the Riemannian metric or the embedding function that parameterizes the manifold $M$. All we have is a randomly sampled point cloud data, $X=\{\mathbf{x}_i\}_{i=1}^N\subset M$. 
At the core of our approach, we need to approximate the differential operator from the dataset $X$ that is assumed to lie on the manifold $M$. We will consider several approaches, including the Diffusion Maps \cite{coifman2006diffusion} algorithm which can construct the specific differential operator in \eqref{eq1} as proposed in Refs.~\cite{gh2019,jiang2020ghost,yan2022ghost}, and more general operator approximators, including the Radial Basis Function (RBF) method \cite{fuselier2012scattered,fuselier2013high,harlim2023radial} and Generalized Moving Least-Squares (GMLS) method \cite{liang2013solving,suchde2019meshfree,gross2020meshfree,jiang2024generalized}, where the latter is more flexible to handle manifolds with boundaries. Our goal in considering these estimators is to demonstrate the robustness of DeepONet independent of the differential operator estimators. In Section SM1 of the supplementary material, we provide a short overview of these differential operator estimators. 

The remainder of this section will be organized as follows: In Section~\ref{sec:DeepONet}, we will give a short overview of DeepONet. In Section~\ref{sec:PIDeepONet}, we will give a short overview of PI-DeepONet. 

\subsection{DeepONet}\label{sec:DeepONet}

We first review the deep operator network (DeepONet) and its architecture. DeepONet was proposed to learn nonlinear operator mappings between infinite-dimensional function spaces~\cite{lu2021learning} based on the universal operator approximation theorem \cite{chen1995universal}. 
Corresponding to the PDE problem in \eqref{eq1}, we define the solution operator
$\mathcal{G}(\kappa) = u(\cdot\,;\kappa)$. While the discussion throughout this paper focuses on $\kappa$ as input, one can extend the approximation to other input parameters, such as $c$ or other parameters in the boundary operators. In our context, DeepONet is a class of neural-network models that approximates $\mathcal{G}$. 

Particularly, let $\mathcal{G}_\theta$ denote the DeepONet approximation of  $\mathcal{G}$, where $\theta$ denotes the trainable parameters of the network. A DeepONet consists of two subnetworks. The branch net takes a discrete representation of $\kappa$, 
the vector,
$\kappa(\Xi) = \left\{\kappa(\boldsymbol{\xi}_1), \kappa(\boldsymbol{\xi}_2), \ldots, \kappa(\boldsymbol{\xi}_m)\right\},$
whose components consist of the input function evaluated at an arbitrary set of sensors $\Xi = \{\boldsymbol{\xi}_1, \boldsymbol{\xi}_2, \ldots, \boldsymbol{\xi}_m \}$
as the input, where $\boldsymbol{\xi}_i \in M$. Here, the sensor locations are not necessarily identical to any element in the training dataset $X$. The trunk net takes the location $\mathbf{x}\in M$ as an input. While we are interested in predicting the solution with $\kappa$ that is also evaluated at the input location of the trunk net, $\mathbf{x}\in M$, the fact that $\kappa$ can be represented by pointwise evaluations on arbitrary sensors gives more flexibility in the training as well as in the prediction when the actual $\kappa$ is only known up to its evaluation on the sensor locations. The DeepONet estimator evaluated at $\mathbf{x}\in M$ is an inner product of the branch and trunk outputs:
\begin{equation}
\mathcal{G}_\theta(\kappa(\Xi))(\mathbf{x}) = \sum_{k=1}^{p} \underbrace{b_k(\kappa(\boldsymbol{\xi}_1), \kappa(\boldsymbol{\xi}_2), \ldots, \kappa(\boldsymbol{\xi}_m))}_{\text{branch}} \underbrace{t_k(\mathbf{x})}_{\text{trunk}} + b_0, \label{DeepONetmap}
\end{equation}
where $b_0 \in \mathbb{R}$ is a bias, $\{b_1, b_2, \ldots, b_p\}$ are the $p$ outputs of the branch net, and $\{t_1, t_2,\ldots,t_p\}$ are the $p$ outputs of the trunk net (Fig.~\ref{fig:pideeponet}A).

To train a DeepONet, we generate labeled data consisting of inputs $\left\{ \kappa^{(k)},\Xi, (\mathbf{x}_i^{(k)})_{i = 1, \ldots N}\right\}_{k=1,\ldots, N_{OBS}}$ and outputs
$\{u(\mathbf{x}_i^{(k)};\kappa^{(k)})\}_{i=1,\ldots, N,k=1,\ldots, N_{OBS}}$.
Since $\kappa^{(k)}$ are known, they can be represented by the vector
$\kappa^{(k)}(\Xi)$, whose components consist of $\kappa^{(k)}$ evaluated at sensor locations, $\Xi=\{\boldsymbol{\xi}_1,\ldots, \boldsymbol{\xi}_m\} $.  The output training data, $\mathcal{G}(\kappa^{(k)})(\mathbf{x}_i^{(k)}):=u(\mathbf{x}_i^{(k)};\kappa^{(k)})$, are generated by the solution of the PDE. In such a case, the network parameters $\theta$ are obtained by minimizing the
empirical loss function defined based on the mean square error between the true $\mathcal{G}(\kappa^{(k)})$ and the network prediction $\mathcal{G}_\theta(\kappa^{(k)})$:
\begin{equation}
\mathcal{L}_{\text{OBS}}(\theta) = \frac{1}{N_{OBS}N} \sum_{k=1}^{N_{OBS}}\sum_{i=1}^{N} \left|\mathcal{G}_\theta(\kappa^{(k)}(\Xi))(\textbf{x}_i^{(k)}) - \mathcal{G}(\kappa^{(k)})(\textbf{x}_i^{(k)}) \right|^2,
\label{eq:ob_loss}
\end{equation}
where $X^{(k)}=\{\textbf{x}_1^{(k)},\ldots, \textbf{x}_N^{(k)}\}$ is the set of $N$ locations in the domain of $\mathcal{G}(\kappa^{(k)})$. In general (as in our setting), however, we do not have access to the analytical solution of the PDE. Instead, we only have the approximate solution 
obtained from either the Diffusion Maps, RBF, or GMLS, as mentioned before, which we denote as $\{\hat{u}(\mathbf{x}_i^{(k)};\kappa^{(k)})\}_{i=1,\ldots, N, k = 1,\ldots, N_{OBS}}$. Accounting for this practical issue, our training is performed on the approximate loss function, 
\begin{equation}
\tilde{\mathcal{L}}_{\text{OBS}}(\theta) = \frac{1}{N_{OBS}N} \sum_{k=1}^{N_{OBS}}\sum_{i=1}^{N} \left|\mathcal{G}_\theta(\kappa^{(k)}(\Xi))(\textbf{x}_i^{(k)}) - \hat{u}(\mathbf{x}_i^{(k)};\kappa^{(k)}) \right|^2.
\label{eq:approx_ob_loss}
\end{equation}
We will discuss the error induced by this approximation in Section~\ref{sec:error}.

\subsection{PI-DeepONet}\label{sec:PIDeepONet}

Combining physics-informed idea with the DeepONet, the physics-informed DeepONet (PI-DeepONet) was introduced in Ref.~\cite{wang2021learning}. 
For the general case, we define a parametric PDE $\mathcal{N}(\kappa, u) = 0$ subject to boundary conditions $\mathcal{B}(\kappa, u) = 0$ (Fig.~\ref{fig:pideeponet}B). The PDE problem in \eqref{eq1} is a concrete example, where 
$\mathcal{N}(\kappa, u) = -\text{div}_g(\kappa \textup{grad}_g u)+c u -f$ and
$\mathcal{B}$ is independent of $\kappa$.

\begin{figure}[htbp]
    \centering
    \includegraphics[width=12cm]{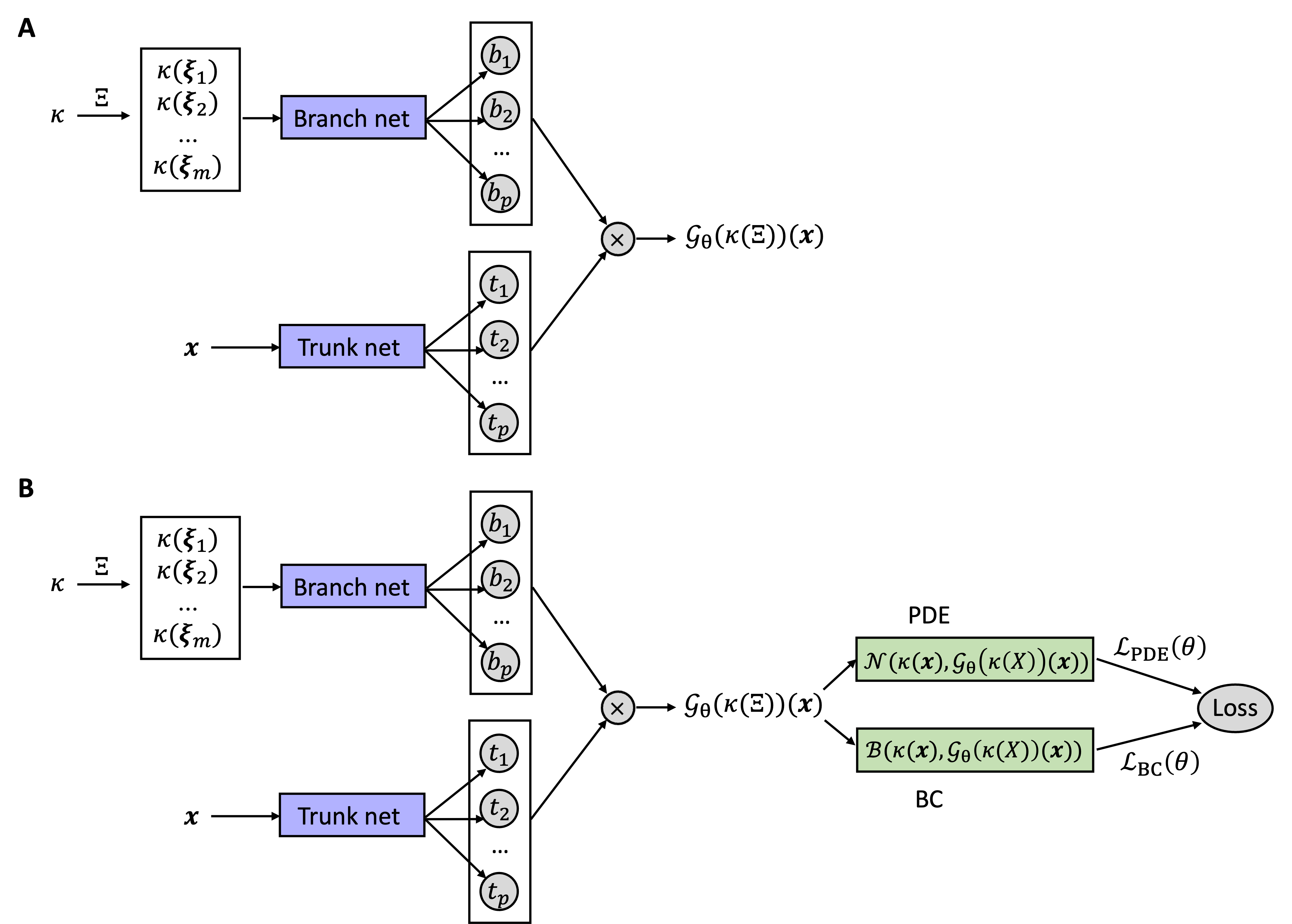}
    \caption{\textbf{The architecture of DeepONets.} (\textbf{A}) DeepONet. (\textbf{B}) PI-DeepONet.}
    \label{fig:pideeponet}
\end{figure}


The PI-DeepONet constraints the observation loss function $\mathcal{L}_{\text{OBS}}$ in (\ref{eq:ob_loss})
with an additional PDE residual loss term $\mathcal{L}_{\text{PDE}}$ and boundary conditions loss term $\mathcal{L}_{\text{BC}}(\theta)$, such that:
\begin{equation}
\mathcal{L}(\theta) = w_{\text{OBS}} \mathcal{L}_{\text{OBS}}(\theta) + 
 w_{\text{PDE}}\mathcal{L}_{\text{PDE}}(\theta) + w_{\text{BC}}\mathcal{L}_{\text{BC}}(\theta),
\label{eq:pi_loss}
\end{equation}
where $w_{\text{OBS}}$, $w_{\text{PDE}}$, and $w_{\text{BC}}$ are the weights for each term. In the original PI-DeepONet settings, the PDE loss term is given as, 
\begin{equation}
\mathcal{L}_{\text{PDE}}(\theta) 
= \frac{1}{N_{PDE}N} \sum_{k=1}^{N_{PDE}}\sum_{i=1}^{N} \left| \mathcal{N}\left(\kappa^{(k)}(\textbf{x}_i^{(k)}), \mathcal{G}_\theta\big(\kappa^{(k)}\big)(\textbf{x}_i^{(k)}) \right)\right|^2.\label{eq:pde_loss}
\end{equation}
When the differential operator is defined on the Euclidean domain, it can be handled 
by the automatic differentiation (AD). In this work, as we solve PDEs on the manifold, we cannot use AD for $\mathcal{L}_{\text{PDE}}$. Instead, the differential operator is approximated by Diffusion Maps, RBF, or GMLS and then incorporated into PDE loss, such that,
\begin{equation}
\mathcal{L}_{\text{PDE}}(\theta) 
\approx \tilde{\mathcal{L}}_{\text{PDE}}(\theta) =  \frac{1}{N_{PDE}N} \sum_{k=1}^{N_{PDE}}\sum_{i=1}^{N} \left| (\textbf{L} + c\textbf{I})\mathcal{G}_\theta\big(\kappa^{(k)}(X^{(k)})\big)(\textbf{x}_i^{(k)}) - f(\textbf{x}_i^{(k)}) \right|^2.
\label{eq:approx_pde_loss}
\end{equation}
We note that the numbers of inputs in the loss functions in \eqref{eq:ob_loss} and \eqref{eq:pde_loss}, respectively, $N_{OBS}$ and $N_{PDE}$,  are not necessarily equal. We denote the discrete approximation of $L^\kappa =-\text{div}_g(\kappa\textup{grad}_g \,)$ on $X^{(k)}=\left\{\mathbf{x}_1^{(k)},\ldots,\mathbf{x}_N^{(k)}\right\}$ by $\mathbf{L}$ that can be $\mathbf{L}^{DM}$, $\mathbf{L}^{RBF}$, or $\mathbf{L}^{GMLS}$ depending on the discretization method being used, which details are shown in Section SM1 of the supplementary material. 

When there are boundary conditions, we may have an additional loss term $\mathcal{L}_{\text{BC}}$ defined as:
\begin{equation}
\mathcal{L}_{\text{BC}}(\theta) 
= \frac{1}{N_{PDE}N} \sum_{k=1}^{N_{PDE}}\sum_{i=1}^{N_b} \left| \mathcal{B}\left(\kappa^{(k)}(\textbf{x}_i^{(k)}), \mathcal{G}_\theta\big(\kappa^{(k)}(X^{(k)})\big)(\textbf{x}_i^{(k)})\right) \right|^2, 
\label{eq:bcic_loss}
\end{equation}
where $\{\textbf{x}_i^{(k)}\in \partial M \}_{i=1}^{N_b}$ are $N_b$ locations on the boundary of $M$ if these points are available as in the Euclidean setting. Since the boundary is a measure zero set, we may not have (or be able to) data points exactly on the boundary since the manifold is unknown. To overcome this issue, we will impose the boundary conditions on points that are sufficiently close to the boundary. Several methods to detect such points are available in the literature. For examples, see \cite{berry2017density} for a method that estimates the distance of the sample points to the boundary and \cite{jiang2024generalized} for a method that does not estimate the distance to the boundary. In our numerical simulations, we will use the close-to-boundary detection method proposed in \cite{jiang2024generalized}. For example, if the PDE satisfies a nonhomogeneous boundary condition, $u=g$ on $\partial M$, then we define $\tilde{g}:U\subset M \to\BR$ as an extension of $g:\partial M \to\BR$ on a neighborhood of $\partial M$ in $M$, that is, $\tilde{g}|_{\partial M} = g$ (e.g., see p.380 of \cite{lee2018introduction} for the validity of such an extension). Define the set of points whose (geodesic) distance from the boundary is larger than $\epsilon>0$ as, 
\[
X_\epsilon^{(k)} = \left\{\mathbf{x}_i \in X^{(k)}: d_g(\mathbf{x}_i,\partial M) > \epsilon\right\},  
\]
and let $N_1(k) = |X_\epsilon^{(k)}|$ and $N_2(k) = |X^{(k)}\backslash X_\epsilon^{(k)}|$ such that $N = N_1(k)+ N_2(k)$, 
for every $k=1,\ldots, N_{PDE}$.
Then, the approximate PDE loss in \eqref{eq:approx_pde_loss} is averaged over $N_1(k)$ points for each $k$ in $X_\epsilon$ rather than over $N$ points in $X$, 
\begin{equation}
\tilde{\mathcal{L}}_{\text{PDE}}(\theta) =  \frac{1}{N_{PDE}} \sum_{k=1}^{N_{PDE}}\frac{1}{N_1(k)}\sum_{i=1}^{N_1(k)} \left| (\textbf{L} + c\textbf{I})\mathcal{G}_\theta\big(\kappa^{(k)}(X^{(k)})\big)(\textbf{x}_i^{(k)}) - f(\textbf{x}_i^{(k)}) \right|^2.
\label{eq:approx_pde_loss2}
\end{equation}
and the approximate boundary condition loss function is given as,
\begin{equation}
\tilde{\mathcal{L}}_{\text{BC}}(\theta) 
= \frac{1}{N_{PDE}} \sum_{k=1}^{N_{PDE}}\frac{1}{N_2(k)}\sum_{i=1}^{N_2(k)} \left|\mathcal{G}_\theta\big(\kappa^{(k)}(X^{(k)})\big)(\textbf{x}_i^{(k)}) - \tilde{g}(\textbf{x}_i^{(k)})  \right|^2,\label{eq:approx_bc_loss}
\end{equation}
where $\textbf{x}_i^{(k)} \in X^{(k)}\backslash X^{(k)}_\epsilon$.

\section{Error induced by the approximate loss function}\label{sec:error}

While the ideal training procedure is to minimize the empirical loss function $\mathcal{L}(\theta)$ defined in \eqref{eq:pi_loss}, this is practically inaccessible since the exact solution operator $\mathcal{G}(\kappa^{(k)})$ 
is intractable and the differential operator can only be approximated from the point cloud data when the manifold is unknown.
With this practical constraint, the training procedure is to minimize the following approximate loss function,
\begin{equation}
\tilde{\mathcal{L}}(\theta) = w_{\text{OBS}} \tilde{\mathcal{L}}_{\text{OBS}}(\theta) + w_{\text{PDE}}\tilde{\mathcal{L}}_{\text{PDE}}(\theta) + w_{\text{BC}}\tilde{\mathcal{L}}_{\text{BC}}(\theta),
\label{eq:approx_pi_loss}
\end{equation}
where $\tilde{\mathcal{L}}_{\text{OBS}}(\theta)$, $\tilde{\mathcal{L}}_{\text{PDE}}(\theta)$, and  $\tilde{\mathcal{L}}_{\text{BC}}(\theta)$
are given by \eqref{eq:approx_ob_loss},  \eqref{eq:approx_pde_loss2}, \eqref{eq:approx_bc_loss}, respectively, for our example. For manifolds with no boundary, one can set $w_{BC}=0$ and use the approximate loss function in \eqref{eq:approx_pde_loss}.

Let us focus the discussion below for the case with boundary condition. In this case, the training procedure induces an error,
\BEA
  \left|\mathcal{L}(\theta) - \tilde{\mathcal{L}}(\theta)\right| & \leq & |w_{OBS}| \left|\mathcal{L}_{OBS}(\theta) - \tilde{\mathcal{L}}_{OBS}(\theta)\right| + |w_{PDE}| \left|\mathcal{L}_{PDE}(\theta) - \tilde{\mathcal{L}}_{PDE}(\theta)\right|  \notag \\ && +|w_{BC}| \left|\mathcal{L}_{BC}(\theta) - \tilde{\mathcal{L}}_{BC}(\theta)\right|, \label{eq:error_loss}
  \EEA
where the first term is,
\BEA
\left|\mathcal{L}_{OBS}(\theta) - \tilde{\mathcal{L}}_{OBS}(\theta)\right| \leq \frac{1}{N_{OBS}N}\sum_{k=1}^{N_{OBS}}\sum_{i=1}^{N} \left| \hat{u}(\mathbf{x}_i^{(k)};\kappa^{(k)}) - \mathcal{G}(\kappa^{(k)})(\textbf{x}_i^{(k)})  \right|^2, \notag 
\EEA
the second term is,
\BEA
\left|\mathcal{L}_{PDE}(\theta) - \tilde{\mathcal{L}}_{PDE}(\theta)\right| &\leq& \frac{1}{N_{PDE}}
\sum_{k=1}^{N_{PDE}}\frac{1}{N_1(k)} 
\notag \\ &&\sum_{i=1}^{N_1(k)} \left|\left(-\text{div}_g(\kappa\textup{grad}_g \,) - \textbf{L} \right) \mathcal{G}_\theta\big(\kappa^{(k)}(X^{(k)})\big)(\textbf{x}_i^{(k)}) \right|^2, \notag
\EEA
and the third term is,
\BEA
\left|\mathcal{L}_{BC}(\theta) - \tilde{\mathcal{L}}_{BC}(\theta)\right| \leq \frac{1}{N_{PDE}} \sum_{k=1}^{N_{PDE}}\frac{1}{N_2(k)} \hspace{5cm} \notag \\ \sum_{i=1}^{N_2(k)}\left|\mathcal{G}_\theta\big(\kappa^{(k)}(X^{(k)})\big)(\textbf{x}_i^{b(k)}) - g (\textbf{x}_i^{b(k)}) - \left( \mathcal{G}_\theta\big(\kappa^{(k)}(X^{(k)})\big)(\textbf{x}_i^{(k)}) - \tilde{g}(\textbf{x}_i^{(k)})\right)\right|^2,\notag
\EEA
\comment{
  \\
 & + \frac{|w_{PDE}|}{N_{PDE}} \sum_{k=1}^{N_{PDE}}\frac{1}{N_1(k)}\sum_{i=1}^{N_1(k)} \left|\left(-\text{div}_g(\kappa\textup{grad}_g \,) - \textbf{L} \right) \mathcal{G}_\theta\big(\kappa^{(k)}(X^{(k)})\big)(\textbf{x}_i^{(k)}) \right|^2 \label{eq:error_loss} \\
& + \frac{|w_{BC}|}{N_{PDE}} \sum_{k=1}^{N_{PDE}}\frac{1}{N_2(k)}\sum_{i=1}^{N_2(k)} \left|\mathcal{G}_\theta\big(\kappa^{(k)}(X^{(k)})\big)(\textbf{x}_i^{b(k)}) - g (\textbf{x}_i^{b(k)}) - \left( \mathcal{G}_\theta\big(\kappa^{(k)}(X^{(k)})\big)(\textbf{x}_i^{(k)}) - \tilde{g}(\textbf{x}_i^{(k)})\right)\right|^2,\notag
}
where $x_i^{b(k)} \in \partial M$ denotes the closest point on the boundary to the data point  $x_i^{(k)}\in X^{(k)}\backslash X_{\epsilon}^{(k)}$. 

The three terms on the right-hand side above consist of the convergence of the PDE solver, the consistency of the numerical approximation of the differential operator, and the error induced by the approximate boundary conditions. The upper bounds for these three terms depend on the PDE solvers that are being used to generate the approximate solution, $\hat{u}(\cdot\,;\,\kappa^{(k)})$, and to approximate $\mathbf{L}$. For Diffusion Maps estimator, the error bound for the case without boundary conditions is of order-$N^{-\frac{4}{d+6}}$, which detailed analysis can be found in \cite{gh2019}. For the Diffusion Maps estimator with boundary (where the sample points on the boundary are also available), the error bounds are reported in \cite{jiang2020ghost} for various types of boundary conditions subject to appropriate regularity on the coefficients $\kappa$, $f$, and boundary operator $\mathcal{B}$. For the RBF approach, the consistency error bound in terms of randomly sampled data can be deduced following the same argument of the proof of Lemma~3.14 in \cite{yan2023spectral}. Since the RBF interpolator is known to be unstable (see Chapter 12 of \cite{Wendland2005Scat}), it is unclear how to bound the first term above, unless additional regularization is being used in attaining the RBF solution $\hat{u}$. 

For the GMLS estimator, we have the following error bounds.
\begin{lem}\label{lem21}
Let $X^{(k)} = \{\mathbf{x}_1^{(k)},\ldots,\mathbf{x}_N^{(k)}\}$ be a set of uniformly i.i.d. samples of $M$. Let $\kappa^{(k)} \in C^1(M)$ and the estimator, $\mathcal{G}_\theta\big(\kappa^{(k)}(X^{(k)})\big)\in C^{p+1}(M^*)$ function, where $M^* = \bigcup_{\mathbf{x}\in M} B(\mathbf{x},C_2N^{-1/d})$
is a union of geodesic ball with center $\mathbf{x}$ and radius $C_2N^{-1/d}$
and $p\geq 2$. Let $\mathbf{L}$ be the GMLS estimator of $-\textup{div}_g(\kappa^{(k)}\textup{grad}_g)$ with intrinsic polynomial of degree-$p$.
\begin{description}
    \item[(a)] Then, with probability higher than $1-\frac{1}{N}$,
    \[
    \left|\left(-\text{div}_g(\kappa\textup{grad}_g \,) - \textbf{L} \right) \mathcal{G}_\theta\big(\kappa^{(k)}(X^{(k)})\big)(\textbf{x}_i^{(k)}) \right| = O(N^{-\frac{p-1}{d}}).
    \]
\item[(b)] Let $\mathcal{G}(\kappa^{(k)}) \in C^{p+1}(M^*)$ be a classical solution of the PDE problem in \eqref{eq1} and $\hat{u}$ be the GMLS approximate solution obtained by the algorithm discussed in Section SM1.3 of the supplementary material, then with probability higher than $1-\frac{2}{N}$,
\[
\left| \hat{u}(\mathbf{x}_i^{(k)};\kappa^{(k)}) - \mathcal{G}(\kappa^{(k)})(\textbf{x}_i^{(k)})  \right| = O(N^{-\frac{p-1}{d}}).
\]
\end{description}
\end{lem}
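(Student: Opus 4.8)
The plan is to establish (a) by the standard GMLS consistency analysis, using polynomial reproduction together with a probabilistic control of the point-cloud geometry, and then to obtain (b) from (a) combined with a uniform stability bound for the discrete operator.

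\emph{Part (a).} Fix a node $\mathbf{x}_i^{(k)}$ and write $v=\mathcal{G}_\theta\big(\kappa^{(k)}(X^{(k)})\big)$. Recall from Section~SM1 that $\mathbf{L}v(\mathbf{x}_i^{(k)})$ is computed by selecting a local stencil $S_i\subset X^{(k)}$ inside a geodesic ball of radius $h\sim N^{-1/d}$ about $\mathbf{x}_i^{(k)}$, introducing intrinsic polynomial coordinates, fitting a degree-$p$ polynomial $\hat v$ to $\{v(\mathbf{x}_j)\}_{j\in S_i}$ by weighted least squares, and applying a discretization of $-\text{div}_g(\kappa^{(k)}\textup{grad}_g\,)$ to $\hat v$ at the center. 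Decompose $v=T_pv+R$, where $T_pv$ is the degree-$p$ Taylor polynomial of $v$ at $\mathbf{x}_i^{(k)}$; the hypothesis $v\in C^{p+1}(M^*)$ gives $|R(\mathbf{x})|\lesssim h^{p+1}$ on $S_i$ uniformly over all stencils, including those meeting $\partial M$. By linearity $\hat v=T_pv+\hat R$. The scheme reproduces the Taylor part exactly (using the exact values of $\kappa^{(k)}$ and $\textup{grad}_g\kappa^{(k)}$ at the center, which are well-defined since $\kappa^{(k)}\in C^1$), so it returns $-\text{div}_g(\kappa^{(k)}\textup{grad}_g T_pv)(\mathbf{x}_i^{(k)})$ up to the error of the intrinsic parametrization; and since $R$ vanishes to order $p+1\geq 3$ at the center, $-\text{div}_g(\kappa^{(k)}\textup{grad}_g R)(\mathbf{x}_i^{(k)})=0$. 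It remains to bound the contribution of $\hat R$: conditioned on a ``good'' event $E_1$ under which every such ball of radius $C_2 N^{-1/d}$ contains enough points to be a norming set for degree-$p$ polynomials with a uniformly bounded constant, one has $\|\hat R\|_{L^\infty(B_h)}\lesssim\max_{j\in S_i}|R(\mathbf{x}_j)|\lesssim h^{p+1}$, and then a Markov/Bernstein inequality for polynomials on a set of diameter $\sim h$ yields $|D^2\hat R(\mathbf{x}_i^{(k)})|\lesssim h^{-2}\|\hat R\|_{L^\infty(B_h)}\lesssim h^{p-1}$, hence $|\mathbf{L}\hat R(\mathbf{x}_i^{(k)})|\lesssim h^{p-1}$. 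The event $E_1$ holds with probability at least $1-\tfrac1N$ by the concentration estimate for i.i.d.\ uniform samples recalled in Section~SM1; substituting $h=C_2N^{-1/d}$ gives the claimed $O(N^{-(p-1)/d})$.

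\emph{Part (b).} Let $u^\star=\mathcal{G}(\kappa^{(k)})$, let $\mathbf{u}^\star$ be its vector of nodal values on $X^{(k)}$, and let $\hat{\mathbf{u}}$ solve the GMLS system $(\mathbf{L}+c\mathbf{I})\hat{\mathbf{u}}=\mathbf{f}$ (with boundary rows as in Section~SM1.3). Since $u^\star$ satisfies $-\text{div}_g(\kappa^{(k)}\textup{grad}_g u^\star)+cu^\star=f$ pointwise, the $i$-th entry of the residual $(\mathbf{L}+c\mathbf{I})\mathbf{u}^\star-\mathbf{f}$ equals $\big[\big(\mathbf{L}-(-\text{div}_g(\kappa^{(k)}\textup{grad}_g\,))\big)u^\star\big](\mathbf{x}_i^{(k)})$, which by part~(a) applied to $v=u^\star\in C^{p+1}(M^*)$ is $O(N^{-(p-1)/d})$ in the $\ell^\infty$ norm on $E_1$. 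The remaining ingredient is a stability bound $\|(\mathbf{L}+c\mathbf{I})^{-1}\|_{\ell^\infty\to\ell^\infty}\leq C$ uniform in $N$, which we take to hold on a second event $E_2$ of probability at least $1-\tfrac1N$ (the discrete coercivity / inf-sup, or $M$-matrix-type, estimate for the GMLS solver established in Section~SM1.3, valid once the local least-squares problems are uniformly well-posed). On $E_1\cap E_2$ we conclude $\|\hat{\mathbf{u}}-\mathbf{u}^\star\|_{\ell^\infty}=\|(\mathbf{L}+c\mathbf{I})^{-1}\big((\mathbf{L}+c\mathbf{I})\mathbf{u}^\star-\mathbf{f}\big)\|_{\ell^\infty}\leq C\cdot O(N^{-(p-1)/d})$; a union bound gives $\mathbb{P}(E_1\cap E_2)\geq 1-\tfrac2N$, matching the stated probability.

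The Taylor splitting and the Markov inequality are routine; the two substantive obstacles are the following. First, the uniform stability of $(\mathbf{L}+c\mathbf{I})^{-1}$ in part (b): GMLS stencil weights are not sign-definite, so there is no automatic discrete maximum principle, and one must instead exploit the coercivity of the underlying variational GMLS formulation (or a careful perturbation of the continuous elliptic estimate), with the sample randomness entering only through well-posedness of the local fits. Second, the intrinsic-parametrization error in part (a): the local coordinates perturb the Riemannian metric at scale $O(h^2)$, and one must check that, after the second-order operator is applied, this still contributes only $O(h^{p-1})$ — the bookkeeping here is exactly what forces $p\geq 2$. These are the points at which we rely on the GMLS convergence theory summarized in Section~SM1 and developed in the cited references.
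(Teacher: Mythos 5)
Your proposal is, in effect, a reconstruction of an argument that the paper never writes out. The paper's entire proof of Lemma~\ref{lem21} is one sentence: it asserts that the proof ``is identical to the discussion in Remark~3.1 and Theorem~4.1 of [jiang2024generalized], which reported the error bounds for the case $\kappa^{(k)}=1$.'' So there is no in-paper argument for you to have missed; you have supplied strictly more detail than the authors. The ingredients you use --- Taylor splitting and exact degree-$p$ polynomial reproduction for consistency, a norming-set / Lebesgue-constant bound on the stencil together with a Markov--Bernstein inequality to control the fitted remainder, the $h_{X,M}\lesssim N^{-1/d}$ concentration estimate for the ``good'' event, and then consistency plus a uniform $\ell^\infty$ stability bound on $(\mathbf{L}+c\mathbf{I})^{-1}$ with a union bound giving $1-\tfrac{2}{N}$ for part~(b) --- are precisely the standard components of the GMLS convergence theory that the cited reference develops. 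The two ``substantive obstacles'' you flag at the end (uniform operator stability, and bookkeeping the $O(h^2)$ metric distortion of the intrinsic chart through a second-order operator) are exactly the nontrivial steps in that theory, so it is reasonable and honest to leave them as citations, which is also what the paper does.

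One inaccuracy worth noting. You assert that the scheme uses ``the exact values of $\kappa^{(k)}$ and $\textup{grad}_g\kappa^{(k)}$ at the center.'' The operator defined in Section~SM1.3 is
$\mathbf{L}^{GMLS}=-\sum_{\ell=1}^n(\mathbf{G}_\ell\kappa)\mathbf{G}_\ell-\kappa\,\Delta^{GMLS}$,
i.e., $\textup{grad}_g\kappa$ is replaced by the GMLS differentiation matrix applied to nodal values, not by the exact gradient. Under $\kappa^{(k)}\in C^1(M)$ alone, the error $\mathbf{G}_\ell\kappa-\mathcal{G}_\ell\kappa$ is only $o(1)$, so the term $(\mathbf{G}_\ell\kappa-\mathcal{G}_\ell\kappa)\,\mathcal{G}_\ell v$ in the operator splitting is $o(1)$, not $O(h^{p-1})$, and would dominate the claimed rate. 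This tension is already present in the lemma's hypotheses as stated (the paper inherits it by pointing to a reference that treats only $\kappa=1$, where the issue does not arise); your proof implicitly adopts the charitable reading that either the exact gradient of $\kappa$ is used or $\kappa$ is smoother than $C^1$. It does not affect the validity of your structure, but it should be called out rather than silently assumed.
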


The proof of this Lemma is identical to the discussion in Remark~3.1 and Theorem 4.1 of \cite{jiang2024generalized} which reported the error bounds for the case $\kappa^{(k)} = 1$.

With this lemma, we can immediately deduce the following result:

\begin{prop}
Let the assumption in Lemma~\ref{lem21} be satisfied. In addition, let $g:\partial M \to \BR$ be a Lipschitz function. Then, with probability higher than $1-\frac{4}{N}$,
\[
\left|\mathcal{L}(\theta) - \tilde{\mathcal{L}}(\theta)\right| \leq  C_1(|w_{OBS}|+|w_{PDE}|) N^{-\frac{2(p-1)}{d}}  + C_2 |w_{BC}| N^{-\frac{2}{d}},
\]
for some constants $C_1, C_2>0$ that are independent of $N$.
\end{prop}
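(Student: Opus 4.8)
The plan is to combine the deterministic triangle-inequality decomposition already displayed in \eqref{eq:error_loss} with the pointwise high-probability estimates of Lemma~\ref{lem21}, treating the three loss terms separately and then taking a union bound over the underlying ``good point cloud'' events.

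First I would bound the observation term. Each summand of the right-hand side of the $\mathcal{L}_{OBS}$ estimate is $\bigl|\hat u(\mathbf{x}_i^{(k)};\kappa^{(k)}) - \mathcal{G}(\kappa^{(k)})(\mathbf{x}_i^{(k)})\bigr|^2$, and Lemma~\ref{lem21}(b) controls the unsquared factor by $O(N^{-(p-1)/d})$ on an event of probability at least $1-2/N$; since this event concerns the distribution quality of $X^{(k)}$ rather than the particular index $i$, no union bound over $i$ is needed, and averaging $\frac{1}{N_{OBS}N}\sum_{k}\sum_{i}$ over a quantity that is uniformly $O(N^{-2(p-1)/d})$ leaves the order unchanged. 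Hence $\bigl|\mathcal{L}_{OBS}(\theta)-\tilde{\mathcal{L}}_{OBS}(\theta)\bigr| = O(N^{-2(p-1)/d})$. The PDE term is handled identically via Lemma~\ref{lem21}(a): each factor $\bigl|\bigl(-\text{div}_g(\kappa\textup{grad}_g\,) - \mathbf{L}\bigr)\mathcal{G}_\theta(\kappa^{(k)}(X^{(k)}))(\mathbf{x}_i^{(k)})\bigr|$ is $O(N^{-(p-1)/d})$ on an event of probability at least $1-1/N$, so after squaring and averaging $\bigl|\mathcal{L}_{PDE}(\theta)-\tilde{\mathcal{L}}_{PDE}(\theta)\bigr| = O(N^{-2(p-1)/d})$.

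Second, for the boundary term I would exploit that $\tilde g$ is an extension of $g$, so $\tilde g(\mathbf{x}_i^{b(k)}) = g(\mathbf{x}_i^{b(k)})$, which rewrites the summand as $\bigl|\bigl(\mathcal{G}_\theta(\mathbf{x}_i^{b(k)}) - \mathcal{G}_\theta(\mathbf{x}_i^{(k)})\bigr) - \bigl(\tilde g(\mathbf{x}_i^{b(k)}) - \tilde g(\mathbf{x}_i^{(k)})\bigr)\bigr|^2$. Because $\mathcal{G}_\theta\in C^{p+1}(M^*)$ is Lipschitz on the compact set $M^*$ with a constant depending only on $\theta$, and $g$ (hence an extension $\tilde g$, which may be chosen Lipschitz, cf. p.~380 of \cite{lee2018introduction}) is Lipschitz, this summand is at most a $\theta$- and geometry-dependent constant times $d_g(\mathbf{x}_i^{(k)},\partial M)^2$. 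Since $\mathbf{x}_i^{(k)}\in X^{(k)}\setminus X_\epsilon^{(k)}$ means $d_g(\mathbf{x}_i^{(k)},\partial M)\le\epsilon$, and $\epsilon = O(N^{-1/d})$ is the natural choice (consistent with the close-to-boundary detection of \cite{jiang2024generalized} and with the scale $C_2N^{-1/d}$ defining $M^*$), averaging gives $\bigl|\mathcal{L}_{BC}(\theta)-\tilde{\mathcal{L}}_{BC}(\theta)\bigr| = O(N^{-2/d})$, on the event (probability at least $1-1/N$) that the point cloud is regular enough for the detection and the fill-distance scaling to behave as stated.

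Finally I would assemble: on the intersection of the three events, which by a union bound has probability at least $1-(1/N+2/N+1/N) = 1-4/N$, substituting the three estimates into \eqref{eq:error_loss} yields $\bigl|\mathcal{L}(\theta)-\tilde{\mathcal{L}}(\theta)\bigr| \le C_1(|w_{OBS}|+|w_{PDE}|)N^{-2(p-1)/d} + C_2|w_{BC}|N^{-2/d}$, with $C_1,C_2$ depending on $\theta$, the manifold, and the coefficients but not on $N$. I expect the main obstacle to be the probabilistic bookkeeping: one must argue that the $O(1/N)$ failure probabilities attach to point-cloud quality \emph{uniformly in the indices} $i$ (and, if the $X^{(k)}$ are regarded as independent draws, that either a single good-cloud event suffices or $N_{PDE},N_{OBS}$ are fixed so the union bound does not erode the $4/N$), together with checking that the Lipschitz constant of $\mathcal{G}_\theta$ and the choice $\epsilon = O(N^{-1/d})$ are genuinely $N$-independent so that $C_1,C_2$ are honest constants.
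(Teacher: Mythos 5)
Your proposal is correct and follows essentially the same route as the paper: decompose via the displayed triangle inequality, invoke Lemma~\ref{lem21}(a) and (b) for the PDE and observation terms (contributing failure probability $1/N + 2/N$), then for the boundary term observe that $\mathcal{G}_\theta - \tilde g$ is Lipschitz and the near-boundary points lie within $O(N^{-1/d})$ of $\partial M$, and finally take a union bound to get $1 - 4/N$. The only cosmetic difference is that the paper phrases the $O(N^{-1/d})$ boundary-distance control via the fill distance $h_{X,M}$ and cites Lemma~B.2 of \cite{harlim2023radial} for $h_{X,M} = O(N^{-1/d})$ with probability $1-1/N$, whereas you phrase it directly in terms of the detection threshold $\epsilon$; these are the same estimate. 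Your closing caveat about the union bound over $k$ not eroding the $4/N$ (requiring the $X^{(k)}$ to be a common point cloud, or $N_{OBS},N_{PDE}$ fixed) is a genuine subtlety that the paper does not spell out, and it is worth keeping in mind.
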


\begin{proof}
The first term in the error bound above is immediately attained by inserting 
the upper bounds in Lemma~\ref{lem21} to the first two right-hand terms in
\eqref{eq:error_loss}. Since $g$ is Lipschitz, we can extend it on an $\epsilon$-neighbors of $\partial M$ that contains the set of points $X\backslash X_\epsilon$. Particularly, let $\tilde{g}$ be the Lipschitz function such that $\tilde{g}|_{\partial M} = g$. Define 
\[
h(\textbf{x}) := \mathcal{G}_\theta\big(\kappa^{(k)}(X^{(k)})\big)(\textbf{x}) - \tilde{g} (\textbf{x}). 
\]
 Since $\mathcal{G}_\theta(\kappa^{(k)}(X^{(k)})\big) \in C^{p+1}(M^*)$ for $p\geq 2$, then it is clear that $h$ is Lipschitz. Denoting $\mathbf{x}_i^{b(k)}$ as the point at the boundary whose geodesic distance is the closest to $\mathbf{x}_i^{(k)}$, we have: 
 \begin{equation}
|h(\mathbf{x}_i^{(k)}) - h(\mathbf{x}_i^{b(k)})  | \leq L d_g(\mathbf{x}_i^{(k)},\mathbf{x}_i^{b(k)}) \leq L h_{X,M},\label{eq:errorh}
 \end{equation}
 where $h_{X,M}= \sup_{\mathbf{x}\in M} \min_{j\in\{1,\ldots,N\}} d_g(\mathbf{x},\mathbf{x}_j)$ is the fill distance. Since the data are uniform i.i.d. samples, then with probability higher than $1-\frac{1}{N}$, $h_{X,M} \leq C(d) N^{-1/d}$ (see Lemma~B.2 in \cite{harlim2023radial} for the proof of this statement). Inserting \eqref{eq:errorh} to the last term in \eqref{eq:error_loss}, the proof is complete.
\end{proof}

Next, we discuss another error source: the approximation error of DeepONet. The theoretical foundation of DeepONet stems from the universal approximation theorem of shallow neural networks for operators in~\cite{chen1995universal}, which is further extended to deep neural networks \cite{lu2021learning,jin2022mionet}. Specifically, it is demonstrated that for any specified tolerance $\epsilon$, there exists a DeepONet such that the resulting approximation error is smaller than this tolerance.
Building on these results, various works have studied approximation error estimates of DeepONet for specific PDEs. For example, Ref.~\cite{lanthaler2022error} presented explicit error bounds of DeepONet in terms of its network size for elliptic PDEs. Marcati et al.~\cite{marcati2023exponential} provided an upper bound on the exponential expression rate of DeepONet for elliptic isotropic diffusion problems. Additionally, Deng et al.~\cite{deng2022approximation,lu2022comprehensive} developed theoretical approximation rates of learning solution operators from both linear and nonlinear advection-diffusion-reaction equations, and they found the approximation rates depend on the architecture of branch networks as well as the smoothness of inputs and outputs of solution operators. Moreover, it is proved that DeepONets can break the curse of dimensionality when approximating certain nonlinear operators \cite{lanthaler2022error, marcati2023exponential}.



\section{Numerical experiments}
\label{sec:experiments}

In this section, we numerically demonstrate the capability and effectiveness of our proposed DeepONet and PI-DeepONet for solving PDEs on manifolds. We consider solving forward problems including a linear PDE defined on the 2D torus (Section~\ref{sec:linear_torus}) and the 2D semi-torus (Section~\ref{sec:linear_semitorus}), a nonlinear PDE on the 2D torus (Section~\ref{sec:nonlinear_torus}).

To learn the solution operator of the PDE using DeepONet and PI-DeepONet, we first generate training dataset and test dataset as described in Section~\ref{sec:DeepONet} and Section~\ref{sec:PIDeepONet}. In this work, for a point $\mathbf{x}=(x, y, z)\in M$, we consider $\kappa$ as a function of $x$ and $y$, i.e., $\kappa=\kappa(x, y)$ for simplicity. One  can easily modify the network architecture to extend it to more general cases. For vanilla DeepONet, the training takes randomly specified $\kappa(\boldsymbol{\Xi})$ as the input for branch net, where the sensors $\Xi$ consists of equi-spaced (in intrinsic coordinates) sensors grid of $26 \times 26$ (i.e., $m=676$). When computing the loss $\mathcal{L}_{\text{OBS}}$, we use the $N_{OBS}$ discretized random functions $\kappa(\Xi)$ during training. 

For PI-DeepONet, similarly, the training takes $N_{PDE}$ and $N_{OBS}$ random $\kappa$ for $\mathcal{L}_{\text{PDE}}$ and $\mathcal{L}_{\text{OBS}}$, respectively. In both cases, we can directly employ classical networks or other network architecture designs for the branch net and trunk net. In our numerical simulations, we choose convolutional neural network (CNN) in the branch net, and feedforward neural network (FNN) with a depth of 3 layers and a width of 32 nodes for the trunk net. We use rectified linear units (ReLU, $\max(0,x)$)~\cite{nair2010rectified} and Gaussian error linear units (GELU, $x \cdot \frac{1}{2}\left[1+\text{erf} (x/\sqrt{2})\right]$)~\cite{hendrycks2016gaussian}) activation functions for branch net and trunk net, respectively. The initial learning rate is set to be $\gamma_0 = 0.001$, and we employ the inverse time learning rate decay technique with the formula
$\gamma_n = \frac{\gamma_0}{1 + rn/S},$
where $n$ is the number of iterations, $\gamma_n$ is the learning rate after $n$ iterations, $r$ is the decay rate, and $S$ is the decay step. In our experiments, we choose $r=0.5$ and $S=20000$ and use Adam optimizer.


To evaluate the performance of different models, we generate a test dataset that consists of inputs $\left\{ \kappa^{(j)},\Xi, (\mathbf{x}_i^{(j)})_{i = 1, \ldots N}\right\}_{j=1,\ldots, N_t}$ and outputs
$\{\mathcal{G}(\kappa^{(j)})(\mathbf{x}_i^{(j)}) = u(\mathbf{x}_i^{(j)};\kappa^{(j)})\}_{i=1,\ldots, N,j=1,\ldots, N_t}$. Here, $\left\{ \kappa^{(j)}\right\}_{j=1,\ldots, N_t}$ represents a distinct set of input functions differing from the training data, and the sensors $\Xi = \{\boldsymbol{\xi}_1, \boldsymbol{\xi}_2, \ldots, \boldsymbol{\xi}_m \}$ and locations $X=\{\mathbf{x}_i\}_{i=1}^N$ are the same as those used in the training dataset. We adopt the mean $L^2$ relative error defined as, 
\[
\mathcal{E}:= \frac{1}{N_t}\sum_{j=1}^{N_t}\frac{\parallel \mathcal{G}_\theta(\kappa^{(j)}(\Xi))(X^{(j)}) - \mathcal{G}(\kappa^{(j)})(X^{(j)}) \parallel_2}{\parallel \mathcal{G}(\kappa^{(j)})(X^{(j)}) \parallel_2}.
\]


We run the test for three times and report the average value. The datasets are generated using MATLAB R2023a. For all experiments, the Python library DeepXDE \cite{lu2021deepxde} is utilized to implement the neural networks. All the codes and data will be available on GitHub at \href{https://github.com/lu-group/manifold-deeponet}{https://github.com/lu-group/manifold-deeponet}.

\subsection{Second-order linear elliptic PDE on torus}
\label{sec:linear_torus}

We first consider solving Eq.~(\ref{eq1}) with $c=1$ on a torus which has the embedding function,
\begin{equation}
\iota(\theta, \phi)=\left(\begin{array}{c}{(R+r\cos \theta) \cos \phi} \\ {(R+r\cos \theta) \sin \phi} \\ {r\sin \theta}\end{array}\right), \quad \theta \in[0,2 \pi),\quad \phi\in[0,2\pi),
\label{eq_emb}
\end{equation}
where $R$ is the distance from the center of the tube to the center of torus and $r$ is the distance from the center of the tube to the surface of the tube with $r<R$. The induced Riemannian metric is
$$
g_{\textbf{x}^{-1}(\theta, \phi)}(v, w)=v^{\top}\left(\begin{array}{cc}r^2 & 0 \\ 0 & (R+r\cos \theta)^{2}\end{array}\right) w, \quad \forall v, w \in T_{\textbf{x}^{-1}(\theta, \phi)} M.
$$
In this example, we choose $R=2$ and $r=1$. We first demonstrate the performance of DeepONet on different types of $\kappa$ and the sensitivity of the method for several choices of $N_{OBS}$. Subsequently, we employ Diffusion Maps and RBF approaches to approximate surface differential operators in PI-DeepONet, and compare them with DeepONet. In these experiments, we fix $N = 2500$ locations on the torus for the trunk net, and $m=676$ sensors for the branch net. 

\subsubsection{Results from DeepONet}
\label{sec:torus_deeponet}
\comment{We test different types of input function $\kappa$ using vanilla DeepONet, which is pure data-driven. The training data is generated using Diffusion Maps method. }
We test DeepONet with four different types of input function $\kappa$ as followings (Fig.~\ref{fig:torus_deeponet_linear}A):
\begin{enumerate}
    \item Linear function: $\kappa(x, y) = ax + by + 6 +c.$
    \item Exponential function: $\kappa(x, y) = ae^x + be^y + c.$
    \item Piecewise linear function:
    \[ \kappa(x, y) =
      \begin{cases} 
      a_1x + b_1y + 10 & x\leq 0, y\leq 0 \\
      a_1x + b_1y + 10 & x > 0, y\leq 0 \\
      a_1x + b_2y + 10 & x\leq 0, y> 0 \\
      a_2x + b_2y + 10 & x > 0, y> 0 \\
      \end{cases}.\]
    \item Nonlinear function: $\kappa(x, y) = a_1x^2 + b_1y^2 + a_2x + b_2y + c.$
\end{enumerate}

Notice that for points $\textbf{x}=(x,y,z)$ on the torus, we have $-3\leq x,y\leq 3$. And the coefficients $a, b, c, a_1, b_1, a_2, b_2$ are random constants with constraints such that all $\kappa$ are positive on $M$. Since the analytical solution operator for the PDE problem in \eqref{eq1} is not available, the training data is generated by solving a linear problem where the differential operator is approximated by the Diffusion Maps method. 

We train DeepONet on datasets induced by each of these four types of $\kappa$ and a mixed dataset of $\kappa$. Each dataset consists of $N_{OBS}=1000$ randomly chosen $\kappa$, where the randomness is through the coefficients as discussed above. For the mixed dataset, we use 250 for each type, culminating in 1000 randomly chosen $\kappa$. We use a set of $N_t = 1000$ randomly chosen $\kappa$ that is independent to the training data for testing. 
To verify the sensitivity to $N_{OBS}$, we also generate 500 and 100 different linear $\kappa$ for training.

Our experiments show that DeepONet demonstrates robust performance across all scenarios. The model achieves $L^2$ relative errors around 2.5\% for all scenarios (Table~\ref{tab:deeponet_res}). Notably, even in the most complex case involving mixed types of $\kappa$, the model maintains an $L^2$ relative error of 2.67\%. For the simplest case with linear $\kappa$, DeepONet achieves an $L^2$ relative error of 2.07\% when training is performed with $N_{OBS} = 1000$ samples. 

For the results of the linear case (Table~\ref{tab:deeponet_res}), with more training data, the performance of the vanilla DeepONet is better, as expected. Even if we reduce the size of training data $N_{OBS}$ to 100, the error is still small (about 3\%). In Fig.~\ref{fig:torus_deeponet_linear}B, we show an example of one linear $\kappa$ and the solution generated by Diffusion Maps is in Fig.~\ref{fig:torus_deeponet_linear}C. We show the corresponding predictions using $N_{OBS} = 1000, 500,$ and 100 in Fig.~\ref{fig:torus_deeponet_linear}D. It is observed that the absolute error of the prediction is smaller when using a larger size training dataset.

\begin{table}[htbp]
    \footnotesize
    \caption{\textbf{Learning the solution operator on torus using DeepONet.} We report the $L^2$ relative error ($\mathcal{E}$) of our method on four types of $\kappa$. For all cases, we use $N=2500$ point locations.}
    \label{tab:deeponet_res}
    \centering
    \begin{tabular}{ccc}
    \toprule
     Type of $\kappa$ & $N_{OBS}$ & $\mathcal{E}$ \\
    \midrule
    \multirow{3}{*}{Linear} 
      & 1000  & 2.07\%  \\
      &  500  & 2.28\% \\
      &  100  & 3.03\%\\
    Exponential & 1000 & 2.29\%\\
    Piecewise & 1000 & 2.56\%\\
    Higher order & 1000 & 2.58\% \\
    Mixed & 1000 & 2.67\% \\
    \bottomrule
\end{tabular}
\end{table}

\begin{figure}[htbp]
    \centering
    \includegraphics[width=13cm]{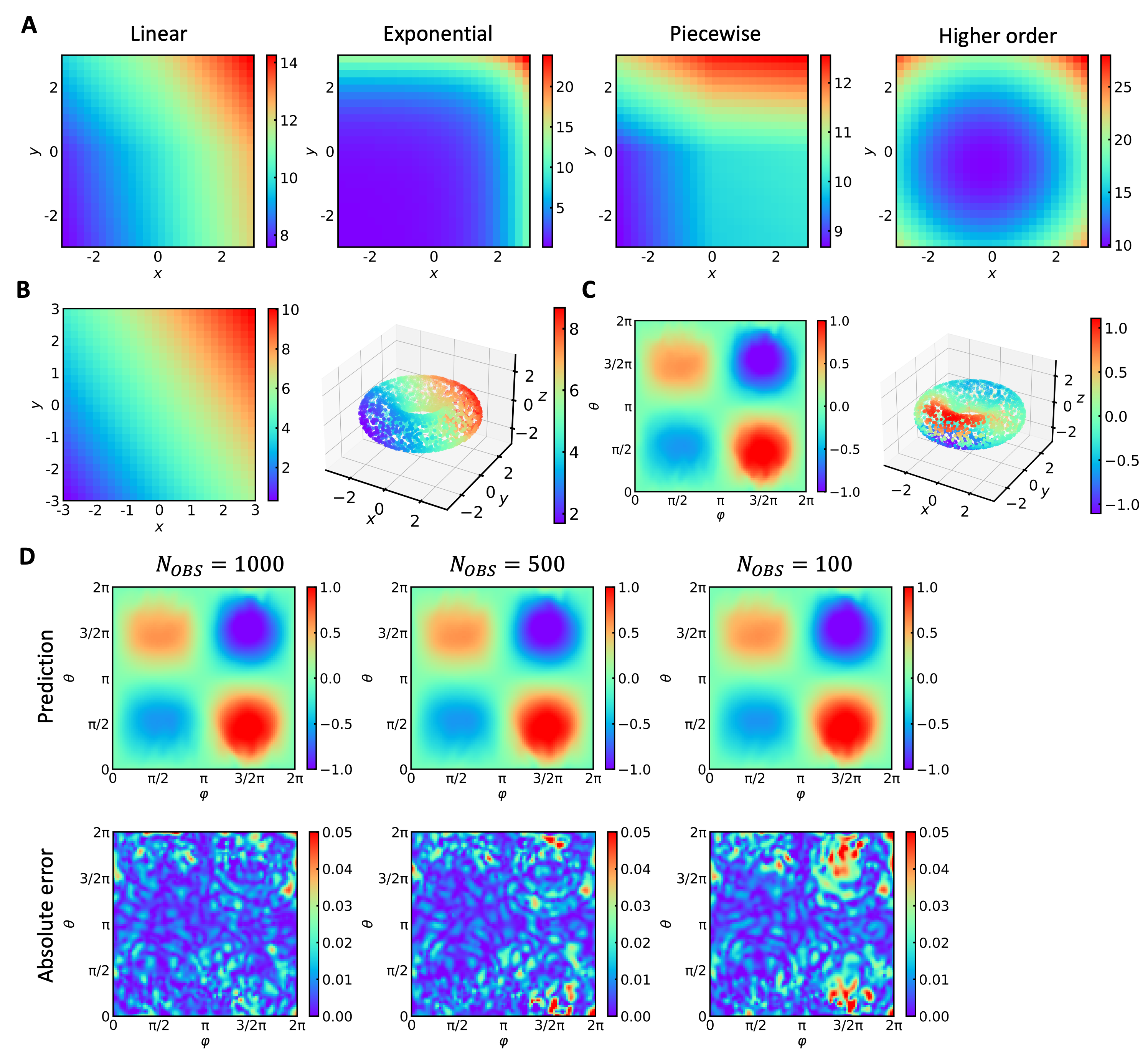}
    \caption{\textbf{DeepONet predictions for the linear $\kappa$ in Section~\ref{sec:torus_deeponet}.}
    (\textbf{A}) Examples of linear $\kappa$, exponential $\kappa$, piecewise $\kappa$, and higher order $\kappa$.(\textbf{B}) 2D and 3D visualizations of one linear $\kappa$. (\textbf{C}) 2D and 3D visualizations of the solution $u$ generated by Diffusion Maps. (\textbf{D}) Predicted solutions and the absolute error with $N_{OBS}=1000, 500$, and $100$. }
    \label{fig:torus_deeponet_linear}
\end{figure}

\subsubsection{Results from PI-DeepONet}
\label{sec:torus_pideeponet}

In this subsection, we present results from PI-DeepONet.
Similar to the previous subsection, we apply Diffusion Maps and RBF approaches to approximate the surface differential operators. The corresponding true solutions for the training and testing datasets are generated using the same approaches employed in computing $\mathcal{L}_{PDE}$.
We set the weights $w_{OBS}$ and $w_{PDE}$ at 1 and 0.0001 for the Diffusion Maps approach, and 1 and 0.001 for the RBF approach, respectively. To demonstrate the efficacy of PI-DeepONet, we train it on several $N_{OBS}$ and compare its performance with DeepONet, and we use $N_t = 1000$ samples of linear $\kappa$ for testing. 

We report the results of DeepONet and PI-DeepONet in Table~\ref{tab:pideeponet_res}. For both Diffusion Maps and RBF estimators, incorporating $N_{PDE}=100$ randomly chosen $\kappa$ into the loss function via PI-DeepONet significantly improves model performance. Specifically, when $N_{OBS}=10$, the $L^2$ relative errors reduce from approximately 25\% to about 4\%, which is a multifold increase in accuracy. When $N_{OBS}=25$, the $L^2$ relative errors decrease from around 5\% and 7\% to below 3\% for Diffusion Maps and RBF approaches. 
Notably, these errors are comparable to those obtained by DeepONet with $N_{OBS}=100$.
Additionally, we observed that an increase in $N_{OBS}$ results in lower $L^2$ relative errors. In Fig.~\ref{fig:torus_linear}B, we show examples of the solutions attained by Diffusion Maps and RBF, with a linear $\kappa$ as depicted in Fig.~\ref{fig:torus_linear}A. 
We show the predictions and absolute errors of DeepONet and PI-DeepONet for $N_{OBS}=10$ and $25$ using Diffusion Maps and RBF estimators in Fig.~\ref{fig:torus_linear}C and D. Notice the dramatic error reduction that can be observed qualitatively across the domain when the physics are incorporated.

\begin{table}[htbp]
    \footnotesize
    \caption{\textbf{Learning the solution operator on torus using DeepONet and PI-DeepONet  with Diffusion Maps or RBF estimator.}}
    \label{tab:pideeponet_res}
    \centering
    \begin{tabular}{ccccc}
    \toprule
     Estimator & Method & $N_{PDE}$ & $N_{OBS}$ & $\mathcal{E}$ \\
    \midrule
    \multirow{4}{*}{Diffusion Maps}
    & \multirow{3}{*}{DeepONet}
      & 0 & 100 & 3.03\% \\
      & & 0 & 25 & 4.72\% \\
      & & 0 & 10 & 26.54\% \\
    & \multirow{2}{*}{PI-DeepONet} 
      & 100 & 25 & \textbf{2.87\% }\\
      & & 100 & 10 & \textbf{4.83\% }\\
     \hdashline
    \multirow{4}{*}{RBF}
    & \multirow{3}{*}{DeepONet} 
      & 0 & 100 & 2.89\% \\
      & & 0 & 25 & 7.04\% \\
      & & 0 & 10 & 25.69\% \\
    & \multirow{2}{*}{PI-DeepONet}  
      & 100 & 25 & \textbf{2.66\%}\\
      & & 100 & 10 & \textbf{3.04\%}\\
    \bottomrule
\end{tabular}
\end{table}

\begin{figure}[htbp]
    \centering
    \includegraphics[width=13cm]{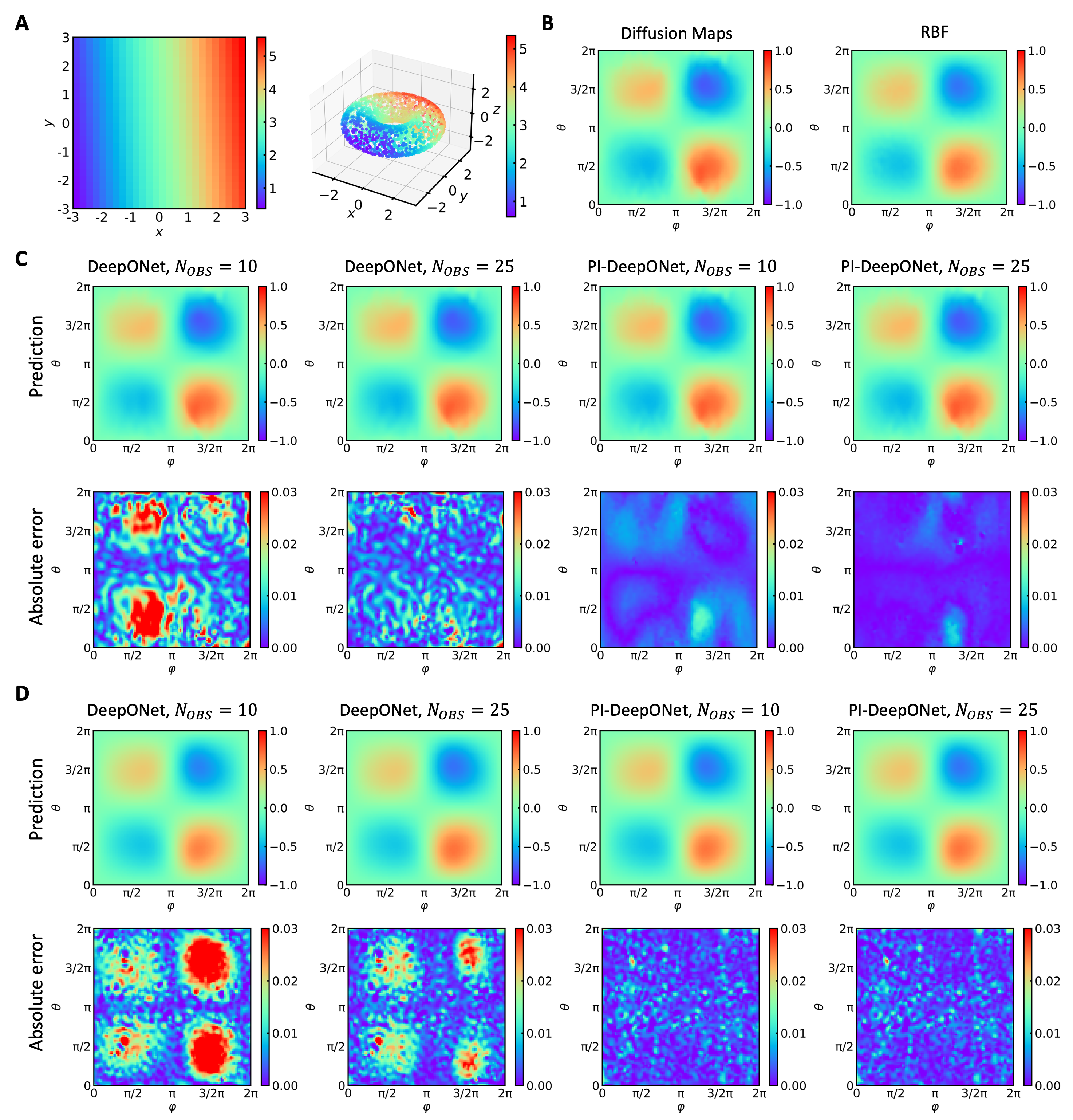}
    \caption{\textbf{An example of prediction of DeepONet and PI-DeepONet in Section~\ref{sec:torus_pideeponet}.} (\textbf{A}) The 2D and 3D visualizations of $\kappa$. (\textbf{B}) The 2D visualization of solutions generated by Diffusion Maps and RBF. (\textbf{C}) The prediction and absolute error of the methods with $N_{OBS}=10$ and 25 for observation loss using Diffusion Maps approach. (\textbf{D}) The prediction and absolute error of the methods with $N_{OBS}=10$ and 25 for observation loss using RBF approach.}
    \label{fig:torus_linear}
\end{figure}

\subsection{Second-order linear elliptic PDE on semi-torus with Dirichlet boundary conditions}
\label{sec:linear_semitorus}

In this section, we consider solving the equation (\ref{eq1}) on a semi-torus with a Dirichlet boundary condition at $\phi = 0, \pi$. The embedding function is the same as (\ref{eq_emb}), except that the range of $\phi$ is [$0, \pi$]. Here, we consider linear $\kappa$ as defined in Section~\ref{sec:torus_deeponet}. The boundary condition is enforced using the points that are sufficiently close to the boundary as described in Section~\ref{sec:PIDeepONet}. We train both DeepONet and PI-DeepONet, where we used GMLS in the latter one to approximate the differential operators since this approximation is simpler and robust for arbitrary boundary conditions. Again, we use $N = 2500$ locations on the torus for the trunk net, and $m=676$ sensors for the branch net as before. The weights $w_{OBS}$ and $w_{PDE}$ are chosen to be 1 and $10^{-7}$, respectively.

When using DeepONet, we observe that the $L^2$ relative errors using $N_{OBS} = 25, 10$ are lower than 0.6\% (Table~\ref{tab:pideeponet_res_semi_torus}), which demonstrates satisfactory accuracy. With only $N_{OBS}=2$, the $L^2$ relative error achieves around 6\%.
The PI-DeepONet improves the accuracy. It's $L^2$ relative errors for $N_{OBS}=25$ and $N_{OBS}=10$ are smaller than those obtained with vanilla DeepONet. Even with only $N_{OBS}=2$, PI-DeepONet outperforms DeepONet and achieves an error less than $3\%$. Detailed qualitative comparisons of the numerical models are shown in Fig.~\ref{fig:semitorus_linear}.

\begin{table}[htbp]
    \caption{\textbf{Learning the solution operator on the semi-torus example using DeepONet and PI-DeepONet with GMLS estimator.}}
    \label{tab:pideeponet_res_semi_torus}
    \centering
    \begin{tabular}{cccc}
    \toprule
     Method & $N_{PDE}$ & $N_{OBS}$ & $\mathcal{E}$ \\
    \midrule
    \multirow{2}{*}{DeepONet} 
      & 0 & 25 & 0.42\% \\
      & 0 & 10 & 0.57\% \\
      & 0 & 2 & 6.17\% \\
    \hdashline
    \multirow{2}{*}{PI-DeepONet} 
      & 100 & 25 & \textbf{0.39\% }\\
      & 100 & 10 & \textbf{0.48\% }\\
      & 100 & 2 & \textbf{2.30\% }\\
    \bottomrule
\end{tabular}
\end{table}

\begin{figure}[htbp]
    \centering    
    \includegraphics[width=13cm]{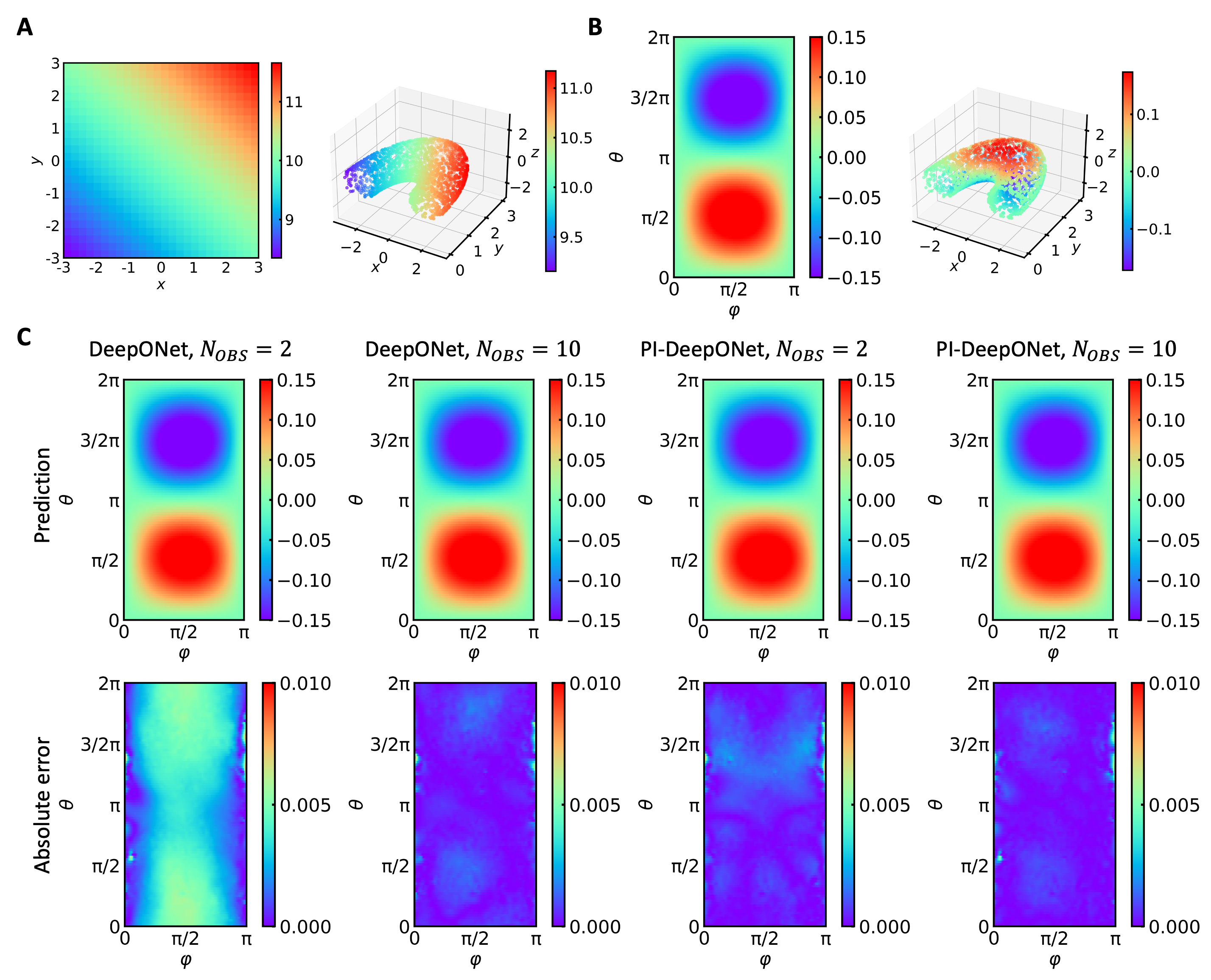}
    \caption{\textbf{Predictions of DeepONet and PI-DeepONet with GMLS approach in Section~\ref{sec:linear_semitorus}.} (\textbf{A}) The 2D and 3D visualizations of $\kappa$. (\textbf{B}) The 2D and 3D visualizations of solution generated by GMLS. (\textbf{C}) The prediction and absolute error of the methods with $N_{OBS}=2$ and $10$ for observation loss.}
    \label{fig:semitorus_linear}
\end{figure}

\subsection{Nonlinear PDE on torus}
\label{sec:nonlinear_torus}

In this section, we consider the following nonlinear (or semilinear) PDE defined on torus:  
$$
-\text{div}_g(\kappa \text{grad}_g u)+u=f(u, \kappa), \quad
f(u, \kappa) = \frac{3}{2}u^2 + u + 
2\kappa u-\frac{1}{2}\kappa^2.
$$
The true solution is set to be $\kappa=a(R+r\cos\theta)$, $u=a\cos\theta $, where $a$ is a constant. We use Diffusion Maps to approximate the surface differential operator. The embedding function and Riemannian metric of the torus is the same as that in Section~\ref{sec:linear_torus}. We use $N = 2500$ locations on the torus for the trunk net, and $m=676$ sensors for the branch net.
We train both DeepONet and PI-DeepONet with $N_{OBS} = 2$ and 10. 

Both DeepONet and PI-DeepONet achieve $L^2$ relative errors around 1\% (Table~\ref{tab:nonlinear_res}). Similar to previous examples, PI-DeepONet performs better on all the cases. A detail qualitative comparison is shown in Fig.~\ref{fig:nonlinear_torus}, where dramatic improvement of the prediction can be observed when the physics is incorporated in training for $N_{OBS}=2$.  

\begin{table}[htbp]
    \footnotesize
    \caption{\textbf{Learning the nonlinear solution operator on torus using DeepONet and PI-DeepONet with diffusion map method.}}
    \label{tab:nonlinear_res}
    \centering
    \begin{tabular}{ccccc}
    \toprule
     Method & $N_{PDE}$ & $N_{OBS}$ & $\mathcal{E}$ \\
    \midrule
    \multirow{2}{*}{DeepONet} 
      & 0 & 10 & 0.89\% \\
      & 0 & 2 & 1.33\% \\
    \hdashline
    \multirow{2}{*}{PI-DeepONet} 
      & 100 & 10 & \textbf{0.69\%}\\
      & 100 & 2 & \textbf{0.71\%}\\
    \bottomrule
\end{tabular}
\end{table}

\begin{figure}[htbp]
    \centering
    \includegraphics[width=13cm]{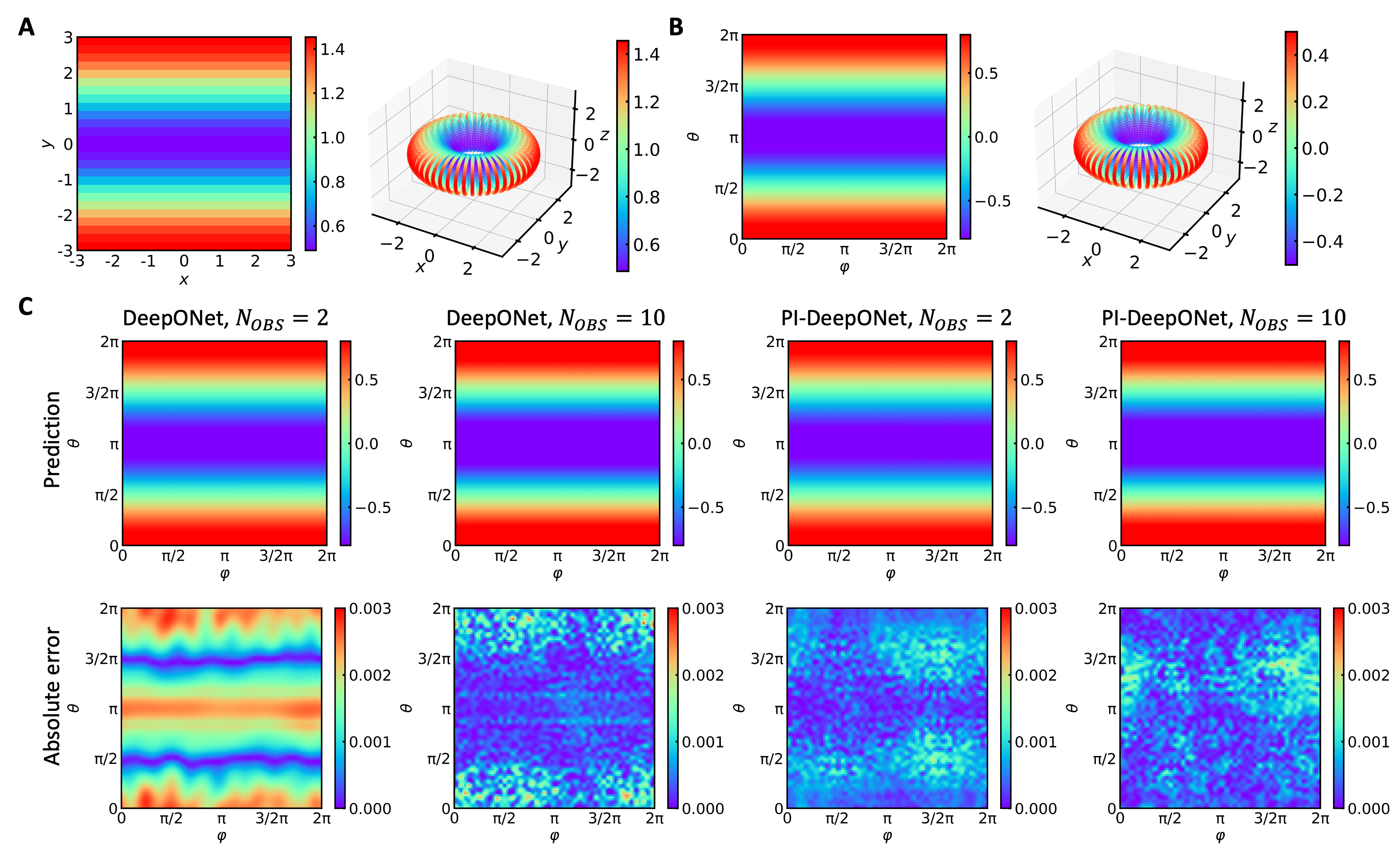}
    \caption{\textbf{Predictions of DeepONet and PI-DeepONet with Diffusion Maps approach in Section~\ref{sec:nonlinear_torus}.} (\textbf{A}) The 2D and the 3D visualizations of $\kappa$.  (\textbf{B}) The 2D and the 3D visualizations of the solution generated by Diffusion Maps. (\textbf{C}) The prediction and absolute error of the methods with $N_{OBS}=2$ and $10$.}
    \label{fig:nonlinear_torus}
\end{figure}

\section{Application to solving Bayesian inverse problems}
\label{sec:inverse_torus}

We consider studying the inverse problem of determining the diffusion coefficient $\kappa$ in elliptic equation (\ref{eq1}) on a torus in Section~\ref{sec:linear_torus} from noisy observations of the solution $u$. We adopt the Bayesian approach to the inverse problem following the method in \cite{harlim2020kernel} and \cite{harlim2022graph}. Instead of using the local kernel method to approximate the forward map as in \cite{harlim2020kernel,harlim2022graph}, we propose to use the PI-DeepONet model. The advantages of this lies in the significant improvement of computational time and flexibility of the neural network model. We will describe the Bayesian formulation of the inverse problem with local kernel method and PI-DeepONet method in Section~\ref{sec:inverse_method}. Then we show the inference results and computational time in Section~\ref{sec:inverse_res}.

\subsection{Bayesian approach to the inverse problem}
\label{sec:inverse_method}

In this section, we consider an application to solve a Bayesian inverse problem involving the elliptic PDE in \eqref{eq1}.
Particularly, our goal is to estimate the diffusion coefficient $\kappa$ in \eqref{eq1} given noisy measurements of $u$ of the form
$$
v(X) = \mathcal{D}u + \eta, \quad \quad \eta \sim \mathcal{N}(0, \Gamma),
$$
at given locations $X=\{\mathbf{x}_i\}_{i=1}^N\subset M$, where the observation map $\mathcal{D}: L^2 \rightarrow \mathbb{R}^J$ is defined as $\mathcal{D} u = u(X) \in \mathbf{R}^N$ and $\Gamma\in \BR^{N\times N}$ is a strictly symmetric positive definite covariance matrix.  



Here we set $\kappa = e^{\alpha}$, where $\alpha \in (-\infty, \infty)$. We then define a forward map $\mathcal{F}: \alpha = \log(\kappa) \mapsto u$, that takes $\alpha$ to the solution of the PDE in \eqref{eq1}. 
With these notations, the Bayesian inversion problem is to approximate the posterior distribution $\pi^v$ that satisfies the Bayesian formula,
\begin{equation}
\frac{d\pi^v}{d\pi}(\alpha) \propto \exp(-\frac{1}{2} \left| v(X) - \mathcal{D}\circ \mathcal{F}(\alpha)\right|^2_{\Gamma}),\label{likelihood}
\end{equation}
where $\pi$ denotes a prior distribution to be specified, and the right-hand term is the Gaussian likelihood function induced by the distribution of the noise $\eta$. In the formula above, we denote $|y|^2_\Gamma = y^\top \Gamma^{-1}y$ for any $y\in \BR^N$. To realize this estimation problem, we will proceed with a Markov Chain Monte Carlo (MCMC) algorithm to sample from the posterior distribution $\pi^v$. Subsequently, we will use the empirical mean statistics from these posterior samples to estimate $\kappa = e^{\alpha}$ and the empirical variance as a metric that quantifies the uncertainty of the mean estimator.

To realize this goal, we
\begin{enumerate}
    \item    
 Specify a prior distribution $\pi$ for the unknown PDE input $\kappa$ and discretize the prior distribution as $\pi_N$ on the point cloud $X$.
    \item Employ the PI-DeepONet model to approximate the forward map $\mathcal{F}$. 
    \item Employ the graph preconditioned Crank–Nicolson (pCN), an MCMC algorithm to sample from the posterior distribution.
\end{enumerate}

\paragraph{Prior specification and discretization} We consider the Mat\'ern type prior from a two-parameter family of Gaussian measures on $L^2$ (following \cite{harlim2020kernel}). 
Particularly, given the point cloud data $X=\{\mathbf{x}_i\}_{i=1}^N\subset M$, we consider the discretized prior distribution
\begin{equation}
\pi(\alpha) \approx \pi_N(\alpha) = \mathcal{N}(0, \mathcal{C}^N_{\tau,s}), \quad \mathcal{C}^N_{\tau,s} = c_N(\tau)(\tau I + \Delta_N)^{-s},
\end{equation}
where $\Delta_N \in \mathbb R^{N\times N}$ is a symmetric graph Laplacian 
matrix 
constructed as described in Eq.~(2.13) of \cite{harlim2020kernel}
on $\{\mathbf{x}_i\}_{i=1}^N$, $c_N(\tau)$ is a normalizing constant, and $\tau>0$, $s>\frac{d}{2}$ are two free parameters. 
By the karhunen-Lo\'eve expansion, samples from $\pi_N$ can be represented as
\begin{equation}
\alpha_N = c_N(\tau)^{1/2}\sum_{i=1}^N (\tau + \lambda_i^{N})^{-s}\xi_i\varphi_i^{(N)} \in \BR^N,
\label{eq:prior}
\end{equation}
where 
\begingroup
\makeatletter
\def\f@size{9}
\check@mathfonts
$\{(\lambda_i^{(N)}, \varphi_i^{(N)})\}_{i=1}^N$ 
\endgroup
are the eigenvalue-eigenvector pairs for $\Delta_N$ and $\xi_i \overset{\text{i.i.d.}}{\sim} \mathcal{N}(0,1)$. Intuitively, $s$ is the rate of decay of the coefficients and $\tau$ is a length-scale parameter that affects the amplitude of the samples. The normalizing constant $c_N(\tau)$ affects the amplitude of the samples and we choose $c_N{(\tau)}=\frac{N}{\sum_{i=1}^N (\tau + \lambda_i^{N})^{-s}}$ so that $\alpha_N$ has a unit variance. 

\paragraph{Approximation of the forward map} 
The key point of this entire section is to evaluate the effectiveness and accuracy of the PI-DeepOnet in this application. Particularly, in each MCMC step, 
for each proposal (i.e., sample) drawn as prescribed in \eqref{eq:prior}, we will evaluate the likelihood function in \eqref{likelihood}, which required us to compute $\mathcal{F}(\alpha_N)$. Here, we will consider the PI-DeepONet approximation $\mathcal{F}_\theta$ to $\mathcal{F}$ as an alternative to directly solve the PDE in \eqref{eq1} for the input  $\kappa_N := e^{\alpha_N}$.
Computationally, while the PI-DeepONet requires an expensive training procedure that can be done in an offline manner (once and for all), it is computationally attractive for the sequential evaluation of the likelihood function on the proposed samples especially when the number of iterations in the MCMC is large as it is typically encountered in applications. In our context, this computational gain is especially more attractive than directly solving the PDE in \eqref{eq1} with $\kappa_N = e^{\alpha_N}$ on each iteration. If we use the local kernel method (as is done in \cite{harlim2020kernel}), this requires solving an $N\times N$ linear problem in each MCMC iteration. Specifically, the local kernel approximation to $\mathcal{F}$ corresponding to the PDE problem in \eqref{eq1} is given by the discretized forward map $\mathcal{F}_{\varepsilon, N}: \alpha_N \mapsto u_N,$ where $u_N = (L^{\kappa}_{\varepsilon, N} + cI)^{-1}f_N$ and $L^{\kappa}_{\varepsilon, N}$ is defined in Eq.~(2.1) of \cite{harlim2020kernel}. In the numerical results shown below, we will compare the accuracy and efficiency of the PI-DeepONet approximation to those obtained from the local kernel method.

\comment{\[
\frac{d\pi^y_N}{d\pi_N}(\alpha_N) \propto \exp(-\frac{1}{2} \mid y - \mathcal{G}_{\varepsilon, N}(\alpha_N)\mid^2_{\Gamma}),
\]
where $\mathcal{G}_{\varepsilon, N} = D_N \circ \mathcal{F}_{\varepsilon, N}$ is the approximation of $\mathcal{G}_{\alpha}$.} 


To train the PI-DeepONet model, $\mathcal{F}_{\theta}: \alpha_N \mapsto u_N $ to approximate the forward map $\mathcal{F}$, we generate the training dataset with $\kappa_N = e^{\alpha_N}$, where $\alpha_N$ are sampled from the prior distribution (\ref{eq:prior}). We utilize the Diffusion Maps approach to approximate the differential operator and obtain the reference solution. In this numerical experiment, we set $N_{OBS} = N_{PDE}$ for both $\mathcal{L}_{OBS}$ and $\mathcal{L}_{PDE}$, and set $N=m$ locations for both branch net and trunk net, set the weights $w_{OBS}=1$ and $w_{PDE}=0.01$, and use 20000 epochs.

\comment{
We save the model and use it to obtain the prediction $u_N$. Using this approach, we have the discretized posterior distribution $\pi^y_N$:
\[
\frac{d\pi^y_N}{d\pi_N}(\alpha_N) \propto \exp(-\frac{1}{2} \mid y - \mathcal{G}_{\theta}(e^{\alpha_N})\mid^2_{\Gamma}).
\]
}

\paragraph{MCMC with pCN}

The graph pCN algorithm is used to obtain samples from the posterior $\pi^v_N$. Each iterate of the pCN (Algorithm 1 in \cite{harlim2022graph}) proceeds as follows:
\begin{enumerate}
\item Suppose that $\alpha_{N}^{(m-1)}$ is the parameter value at the previous iterate. Then (at the $m$th iterate), draw a proposal as follow,
\[
\tilde{\alpha}_N = (1-\beta^2)^{1/2} {\alpha}_N^{(m-1)} + \beta \gamma_N, 
\]
where $\beta \in (0, 1)$ is a tuning parameter that controls the size of the proposed moves of the chain, and $\gamma_N \sim \pi_N$ is drawn in accordance to \eqref{eq:prior}.
\item Accept the proposal, i.e., set $\alpha_N^{(m)} = \tilde{\alpha}_N$ with probability,
\[ a\left(\alpha_N^{m-1},\tilde{\alpha}_N\right) =
\min\left\{1, \exp\left(\frac{1}{2}\left|v-\mathcal{D}\circ \mathcal{F}_\theta(\alpha_N^{m-1})\right|_{\Gamma}^2-\frac{1}{2}\left|v-\mathcal{D}\circ \mathcal{F}_\theta(\tilde{\alpha}_N)\right|_{\Gamma}^2\right)\right\}. \]
If the proposal is rejected, then we set $\alpha_N^{(m)} = \alpha_N^{(m-1)}$. We note that if the local kernel is used, then we use $\mathcal{F}_{\epsilon,N}$ in place of the pre-trained DeepONet estimator $\mathcal{F}_\theta$ in the evaluation of the acceptance rate, $a$.
\end{enumerate}

\subsection{Numerical performance and computational time}
\label{sec:inverse_res}
Based on the posterior samples discussed in the previous section, we estimate $\kappa$ with the empirical posterior mean $\Bar{\kappa}$ obtained by averaging over 5000 samples, and subsequently compute the solution $\Bar{u}$ using the local kernel method from $\Bar{\kappa}$ for different noise levels. We use $L^2$ relative error as the metric to quantify the error of the inferred $\Bar{\kappa}$ and the reconstruction error of $\Bar{u}$. The graph Laplacian for the prior is constructed with 16 nearest neighbors and we empirically set the parameters: $\tau = 0.08$ and $s = 6$. The observation noise is assumed to be uncorrelated with a diagonal covariance matrix, $\Gamma = \sigma^2 I$, where $\sigma = 0.01, 0.05,$ and $0.1$. We present the $L^2$ relative error of $\Bar{\kappa}$ and $\Bar{u}$ using both the PI-DeepONet and local kernel methods and report the computational time per iteration of the pCN method in Table~\ref{tab:res_inverse_torus}. 

\begin{table}[htbp]
    \footnotesize
    \caption{\textbf{The inverse problem on a torus.} We compare the results using local kernel method and PI-DeepONet method.}
    \label{tab:res_inverse_torus}
    \footnotesize
    \centering
    \begin{tabular}{cccccccc}
    \toprule
     Method & $N_{PDE}$, $N_{OBS}$ & $N$ & $\sigma$ & $\beta$ & $\mathcal{E}$ of $\kappa$ & $\mathcal{E}$ of $u$ & Time (s)\\
    \midrule
    \multirow{2}{*}{Local kernel} 
      & - & $20\times20$ & 0.01 & 0.02 & 7.10\% & 1.87\% & 0.0066\\
      & - & $20\times20$ & 0.05 & 0.01 & 6.01\% & 3.57\% & 0.0066\\
      & - & $20\times20$ & 0.1  & 0.02 & 11.18\% & 6.11\% & 0.0066\\
      & - & $50\times50$ & 0.01 & 0.02 & 5.56\% & 1.49\% & 0.3008\\
    \hdashline
    \multirow{2}{*}{PI-DeepONet} 
       & 1000 & $20\times20$ & 0.01 & 0.02 & 8.67\% & 3.88\% & 0.0008\\
       & 1000 & $20\times20$ & 0.05 & 0.01 & 7.18\% & 4.75\%  & 0.0008\\
       & 1000 & $20\times20$ & 0.1 & 0.02 & 12.04\% & 7.46\%  & 0.0008\\
       & 300 & $50\times50$ & 0.01 & 0.02 & 7.24\% & 2.93\% & 0.0160\\
    \bottomrule
\end{tabular}
\end{table}

PI-DeepONet demonstrates comparable results to the local kernel method with significantly reduced computational time. Particularly when $N=60\times60$, the local kernel method's iteration time is approximately 1.22 seconds, which is impractical for large-scale applications. In contrast, the iteration time for PI-DeepONet is only about 0.03 seconds, which is 40 times faster. In terms of accuracy, we suspect that the lack of accuracy of the PI-DeepONet in some cases is due to the interpolation error of the estimator $\mathcal{G}_\theta$, which is possibly due to lack of tuning of the DeepONet hyperparameters such as $N_{OBS}$, $N_{PDE}$, $\lambda$, $w_{OBS}, W_{PDE}$.  

In Fig.~\ref{fig:pideeponet_inverse}, we display the ground truth, prediction, and absolute error of $\kappa$ and standard deviations of posteriors for different noise levels $\sigma$. It shows the effectiveness of PI-DeepONet method in achieving reasonable accuracy with fast computational speeds. 
We find that the wall clock time for each pCN iteration grows approximately quadratically as a function of the number of point cloud data, $N$ (Fig.~\ref{fig:time}). 
The computational cost with PI-DeepONet is more than 8 times faster than that of the local kernel for $N= 20^2$, and about 40 times faster for $N= 60^2$. {The red and blue dashed lines in the figure represent the fitting of the wall-clock time versus $N$ for PI-DeepONet and the local kernel method, respectively. Our empirical results show that the proposed PI-DeepONet method has an increase in wall-clock time of $\mathcal{O}(N^{1.77})$ as the data size grows. It represents a substantial improvement over the local kernel method, which exhibits a time complexity of $\mathcal{O}(N^{2.25})$.}

\begin{figure}[htbp]
    \centering
    \includegraphics[width=12cm]{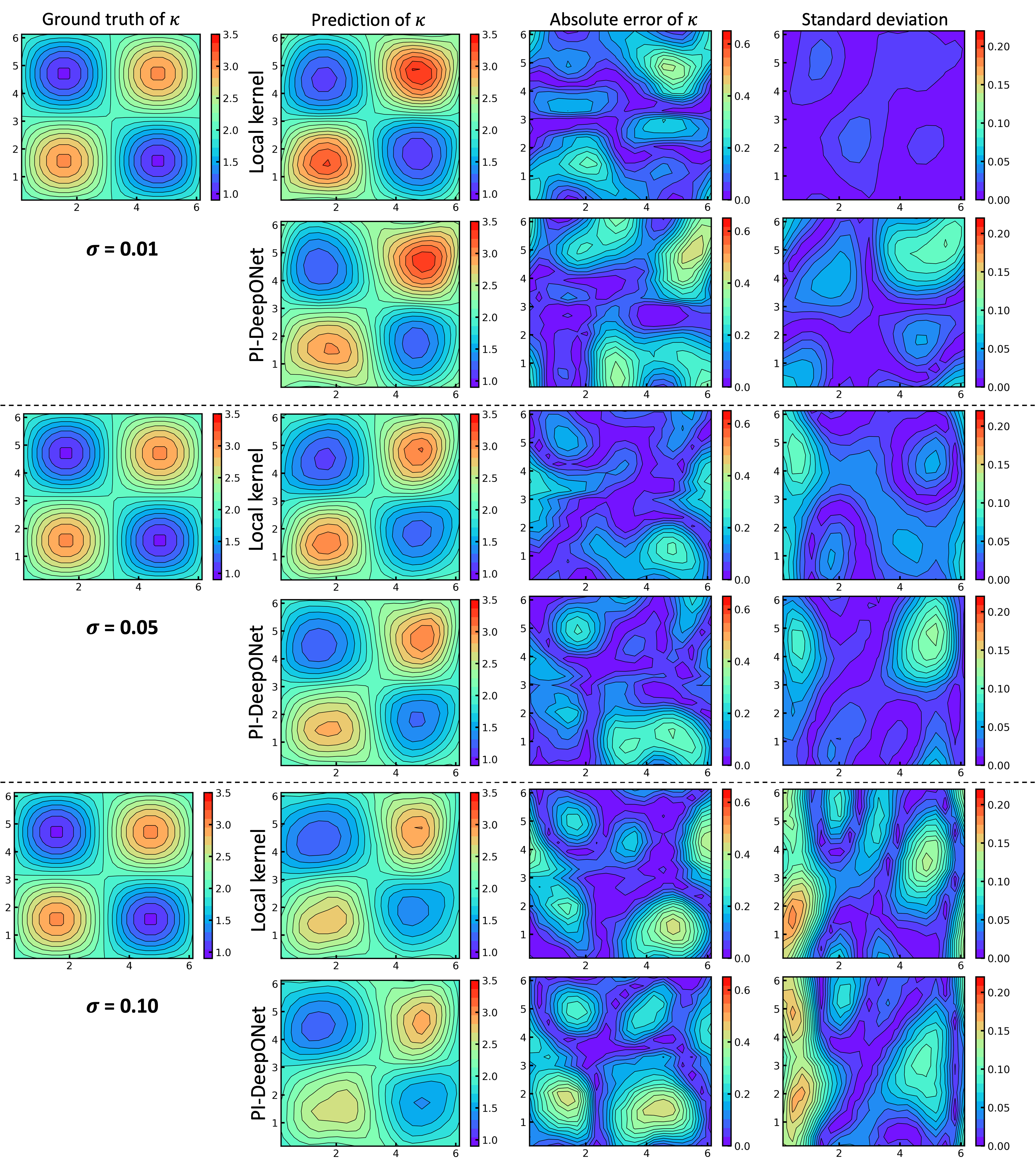}
    \caption{\textbf{The ground truth of $\kappa$, posterior mean of $\kappa$, and its standard deviation of the local kernel and PI-DeepONet method for different noise level $\sigma$.}}
    \label{fig:pideeponet_inverse}
\end{figure}

\begin{figure}[htbp]
    \centering
    \includegraphics[width=7cm]{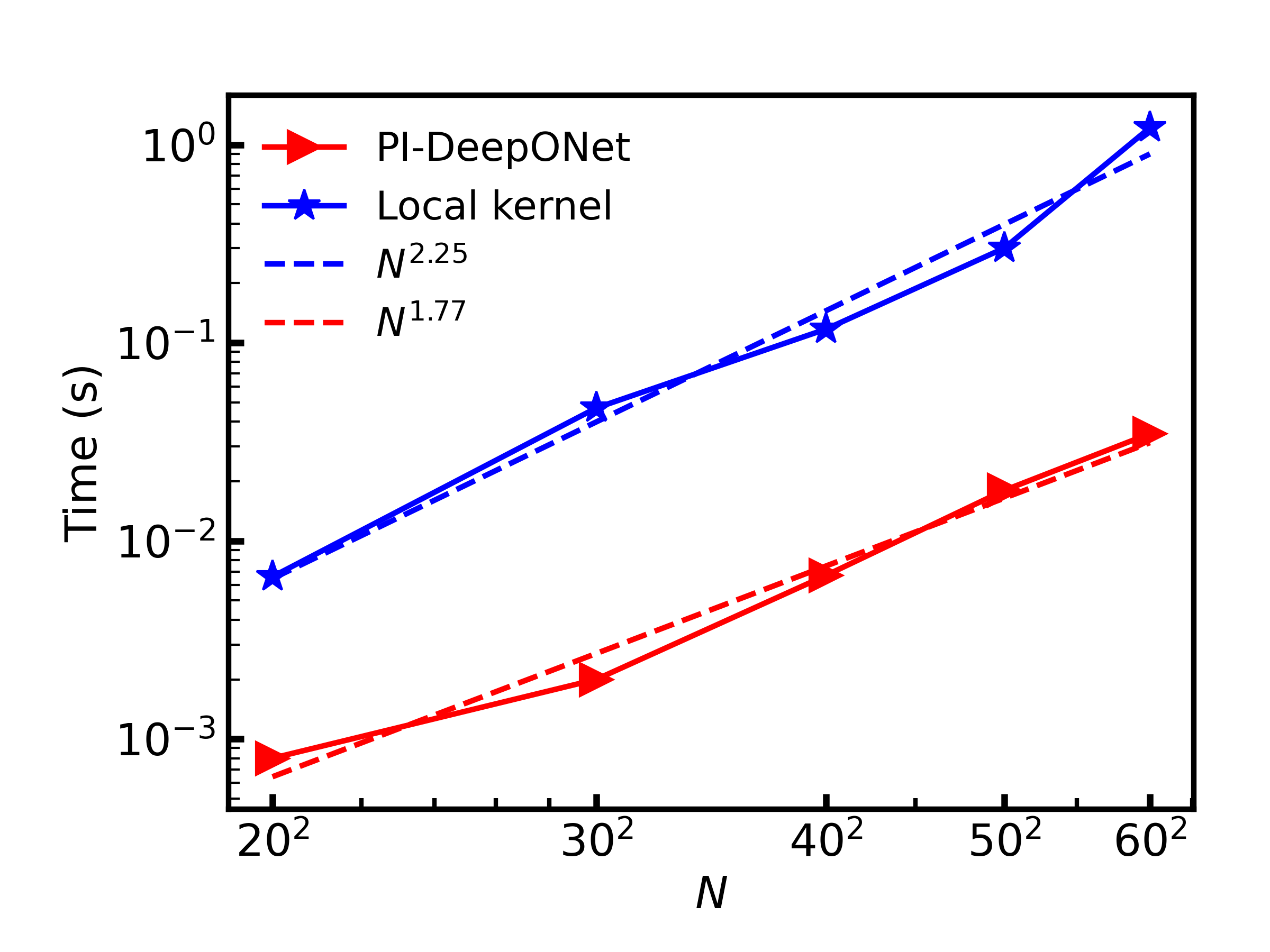}
    \caption{\textbf{Wall-clock time of the local kernel and PI-DeepONet method per pCN iteration as functions of the number of point cloud data, $N$.}}
    \label{fig:time}
\end{figure}


Here, we analyze the time complexity of the local kernel method and PI-DeepONet that differs in the pCN iteration. The computational cost of the local kernel method is dominated by the inversion of the $N\times N$ discrete operator $L^{\kappa}_{\varepsilon, N}$, which is a sparse matrix with $NK$ nonzeros. In the experiments, we use $K = 1.5\sqrt{N}$ nearest neighbors in the diffusion maps approach. The inversion requires $\mathcal{O}(NK)= \mathcal{O}(N^{3/2})$ operations. In our implementation, we use \texttt{scipy.sparse.linalg.spsolve} function instead of pseudo-inverse or direct inversion in Python, which is more efficient to ensure a fair comparison. For the one-time prediction of PI-DeepONet, the time complexity is dominated by the time it takes to complete a forward pass through the neural network. The input of the branch net is an $N$ dimensional vector, and the input of the trunk net is a $N \times 3$ dimensional vector for the collocation points. For branch net, we use a CNN with two convolutional layers with the filter size $F \times F$ and stride $S$ for both layers. The numbers of filters for the convolutional layers are $K_1$ and $K_2$, and we use two dense layers with $K_3$ output features for both. The input size of the first convolutional layer is $\sqrt{N} \times \sqrt{N} \times 1$, and the output shapes of two layers are $W \times W \times K_1$ and $H \times H \times K_2$, where $W = \lfloor(\sqrt{N} - F)/S \rfloor + 1$ and $H = \lfloor(W - F)/S\rfloor + 1$. Hence, the time complexity of this CNN is $\mathcal{O}(W^2K_1F^2 + H^2K_2F^2 + H^2K_2K_3 + K_3^2)$. In our experiment, $F = 3, S = 2, K_1 = 16, K_2 = K_3 = 32$ are fixed for different $N$. Hence, the complexity of CNN is $\mathcal{O}(N)$. For trunk net, we use an FNN with $L=3$ layers and depth $D = 32$ for each layer, which requires $\mathcal{O}((2 \times 3 \times D + 2 \times L \times D \times D) \times N) = \mathcal{O}(N)$ operations. The combination of the branch net and trunk net is the dot product of two vectors, and the complexity is $\mathcal{O}(N \times (D+D-1)) = \mathcal{O}(N)$. Hence the total time complexity of DeepONet is approximately $\mathcal{O}(N) + \mathcal{O}(N) + \mathcal{O}(N) = \mathcal{O}(N)$.

\comment{
\begin{table}[htbp]
    \caption{\textbf{The time scaling of the local kernel method and PI-DeepONet method with the number of locations $N$ used.}}
    \label{tab:time_res}
    \centering
    \begin{tabular}{cccccc}
    \toprule
     $N$ & $20 \times 20$ & $30 \times 30$ & $40 \times 40$ & $50 \times 50$ & $60 \times 60$ \\
    \midrule
    Local kernel &
    0.0066& 0.0469& 0.1173& 0.3008& 1.2196\\
    PI-DeepONet &
    0.0008& 0.0020& 0.0067& 0.0180& 0.0348\\
    \bottomrule
\end{tabular}
\end{table}
}

\section{Conclusions}
\label{sec: conclusions}

In this paper, we developed DeepONet and PI-DeepONet to solve elliptic PDEs on manifolds and learn the corresponding solution operators. We also integrated a PI-DeepONet model into a Bayesian framework for tackling inverse problems.

We demonstrated the effectiveness of our method on numerical experiments across different scenarios including linear and nonlinear problems, torus and semi-torus, and inverse problems. We have shown that DeepONet and PI-DeepONet perform well on random point cloud with flexibility, and PI-DeepONet with physics incorporated achieve better results than DeepONet. In the inverse problem,  PI-DeepONet is able to reduce computational time significantly while maintaining comparable accuracy. For the future work, we aim to utilize our method on more complex PDEs and exploring their effectiveness in more irregular geometries.

\section*{Acknowledgments}

This work was supported by the U.S. Department of Energy Office of Advanced Scientific Computing Research under Grant No.~DE-SC0022953 and the U.S. National Science Foundation under Grant No.~DMS-2347833 as part of the Joint DMS/NIGMS Initiative to Support Research at the Interface of the Biological and Mathematical Sciences. The research of J.H. was partially supported by the NSF grants DMS-2207328, DMS-2229535, and the ONR grant N00014-22-1-2193.  

\appendix
\section{Approximation of differential operators on manifolds}\label{AppA}
In this section, we provide a short overview of three differential operator estimators on manifolds: Diffusion Maps methods, Radial Basis Function (RBF) method, and Generalized Moving Least-Squares (GMLS) method.

\subsection{Review of Diffusion Map method}\label{sec:DM}
Here, we provide a short review of a kernel approximation to the differential operator $\mathcal{L}^\kappa :=-\text{div}_g(\kappa\textup{grad}_g \,)$ with a fixed-bandwidth Gaussian kernel \cite{gh2019,harlim2020kernel,jiang2020ghost} on a $d$-dimensional manifold $M$. For more accurate estimation when the data is non-uniformly distributed, one can extend this graph-Laplacian approximation using the variable bandwidth kernel as in \cite{bh:16vb}.

Assume that we are given a set of point cloud data $X=\{\textbf{x}_i\}_{i=1}^N\subset M$ independent and identically distributed according to $\pi$. Let $h:[0,\infty)\rightarrow[0,\infty)$ be defined as $h(s)=\frac{e^{-s / 4}}{(4 \pi)^{d / 2}}$ and  $\epsilon>0$ a fixed bandwidth parameter. Following the pointwise estimation method in \cite{liang2024solving}, one can first approximate the sampling density $q=d\pi/dV$ evaluated at $\mathbf{x}_i$, with $Q_i:=\epsilon^{-d / 2} N^{-1} \sum_{j=1}^{N} h\left(\frac{\left\|\mathbf{x}_{i}-\mathbf{x}_{j}\right\|^{2}}{\epsilon}\right)$ and then construct $\mathbf{W}\in\mathbb{R}^{N\times N}$ as
$$
\mathbf{W}_{i j}:=\epsilon^{-d / 2-1} N^{-1} h\left(\frac{\left\|\mathbf{x}_{i}-\mathbf{x}_{j}\right\|^{2}}{\epsilon}\right) \sqrt{\kappa\left(\mathbf{x}_{i}\right) \kappa\left(\mathbf{x}_{j}\right)} Q_{j}^{-1}.
$$

Next, one can obtain the diffusion Maps (DM)
estimator, $\mathbf{L}^{DM}:=\mathbf{D}-\mathbf{W}$, where $\mathbf{D}\in\mathbb{R}^{N\times N}$ is a diagonal matrix with diagonal entries $\mathbf{D}_{i i}=\sum_{j=1}^{N} \mathbf{W}_{i j}$. Then $\mathbf{L}^{DM}$ is a discrete estimator to the operator $\mathcal{L}^\kappa=-\text{div}_g(\kappa\textup{grad}_g \,)$ in high probability. 

For accurate estimation, one has
to specify the appropriate bandwidth parameter, $\epsilon$. In our
implementation, we use the $k-$nearest neighbor (kNN) algorithm to avoid computing
the distances between pairs of points that are sufficiently far away.
That is, for each $x_i$, we only use its $k-$nearest neighbors, denoted by $ x_{i_r}$ for $r=1,\ldots,k$, to construct the kernel in $\mathbf{W}_{ij}$, which reduces the computational cost from $O(N^2)$ to $O(kN)$ in the construction of $\mathbf{L}_\epsilon$, in addition to a one time cost of employing kNN algorithm.

Our choice of $\epsilon$ follows the method originally proposed in
\cite{coifman2008TuningEpsilon}. The idea relies on the following
observation,
\begin{eqnarray}
S(\epsilon):=\frac{1}{Vol(M)^2}\int_M \int_{T_xM} h\left(\frac{\left\|x-y\right\|^{2}}{\epsilon}\right)  dy\,dV(x)
= \frac{1}{Vol(M)^2}\int_M (4\pi\epsilon)^{d/2} dV(x) = \frac{%
(4\pi\epsilon)^{d/2}}{Vol(M)}.  \label{scalingS}
\end{eqnarray}
Since $S$ can be approximated by a Monte-Carlo integral, for a fixed $k$, we
approximate,
\begin{eqnarray}
S(\epsilon) \approx \frac{1}{Nk}\sum_{i=1}^{N}\sum_{r=1}^{k} \exp\Big(-\frac{%
\|x_i-x_{i_r}\|^2}{4\epsilon}\Big),  \notag
\end{eqnarray}
where $\{x_{i_r}\}_{r=1}^k$ is the kNN of each $x_i$. We
choose $\epsilon$ from a domain (e.g., $[2^{-14},10]$ in our numerical
implementation) such that $\frac{d\log(S)}{d\log\epsilon} \approx \frac{d}{2}
$. Numerically, we found that the maximum slope of $\log(S)$ often coincides
with $d/2$, which allows one to use the maximum value as an estimate for the
intrinsic dimension $d$ when it is not available, and then choose the
corresponding $\epsilon$. We should also point out that this bandwidth
tuning may not necessarily give the most accurate result (as noted in \cite%
{bh:16vb}); however, it gives a useful reference value for further tuning.

\subsection{Review of RBF method}\label{AppA:A2}
\par In this section, we review the global Radial Basis Function methods  \cite{piret2012orthogonal,fuselier2013high} to approximate surface operators. We first review the radial basis function interpolation over a set of point cloud data. Let $M$ be a $d$-dimensional smooth manifold of $\mathbb{R}^n$. Given a set of (distinct) nodes $X=\{\textbf{x}_i\}_{i=1}^N\subset M$ and function values $\mathbf{f}:=(f(\textbf{x}_1),\ldots, f(\textbf{x}_N))^\top$\ at $X = \{\textbf{x}_{j}\}_{j=1}^{N}$, where $f:M\rightarrow\mathbb{R}$ is an arbitrary smooth function, a radial basis function (RBF) interpolant takes the form
\BEA
I_{\phi_s}\mathbf{f}(x)=\sum_{j=1}^Nc_j\phi_s\big(\Vert \textbf{x}-\textbf{x}_j\Vert\big), \ \  \textbf{x}\in M,
\label{RBF-form}
\EEA
where $c_j$ are determined by requiring $I_{\phi_s} \mathbf{f}|_X=\mathbf{f}$. Here, we have defined the interpolating operator $I_{\phi_s}:\mathbb{R}^{N} \to C^\alpha(\mathbb{R}^n)$, where $\alpha$ denotes the smoothness of the radial kernel $\phi_s$. In \eqref{RBF-form}, the notation $\Vert\cdot\Vert$ corresponds to the standard Euclidean norm in the ambient space $\mathbb{R}^n$. The interpolation constraints can be expressed as the following linear system
\BEA
\underbrace{\left[
\begin{array}{ccc}
\phi _{s}(\Vert \mathbf{x}_{1}-\mathbf{x}_{1}\Vert ) & \cdots  &
\phi _{s}(\Vert \mathbf{x}_{1}-\mathbf{x}_{N}\Vert ) \\
\vdots  & \ddots  & \vdots  \\
\phi _{s}(\Vert \mathbf{x}_{N}-\mathbf{x}_{1}\Vert ) & \cdots  &
\phi _{s}(\Vert \mathbf{x}_{N}-\mathbf{x}_{N}\Vert )%
\end{array}%
\right]}_{\boldsymbol{\Phi}}\underbrace{\left[\begin{array}{c}c_{1} \\ \vdots \\ c_{N}\end{array}\right]}_{\mathbf{c}}=\underbrace{\left[\begin{array}{c}f\left(\mathbf{x}_{1}\right) \\ \vdots \\ f\left(\mathbf{x}_{N}\right)\end{array}\right]}_{\mathbf{f}},
\label{RBF-inter}
\EEA
where $[\boldsymbol{\Phi}]_{i,j}=\phi_s(\Vert\textbf{ x}_i-\textbf{x}_j\Vert)$.

In literature, many types of radial functions have been proposed for the RBF interpolation. For example, the Gaussian function $\phi_s(r)=e^{-(sr)^2}$, Multiquadric function $\phi_s(r)=\sqrt{1+(sr)^2}$, Wendland function $\phi_s(r)=(1-sr)^m_+p(sr)$ for some polynomials $p$, and the Inverse quadratic function $\phi_s(r)=1/(1+(sr)^2)$. In our numerical examples, we will implement the Inverse quadratic function. While this kernel yields a positive definite matrix $\boldsymbol{\Phi}$, the matrix tends to have a high condition number, especially when the point cloud data is randomly sampled. To overcome this issue, we solve the linear problem in \eqref{RBF-inter} using the standard pseudo-inverse method with an appropriately specified tolerance. Numerically, we use $\text{pinv}(\Phi,\text{1e-6})$ in the MATLAB code. 

Now, we review the RBF projection method proposed in \cite{fuselier2013high} for a discrete approximation of surface differential operators on manifolds. The projection method represents the surface differential operators as tangential gradients, which are formulated as the projection of the appropriate derivatives in the ambient space. For any point $\textbf{x}=(x^1,...,x^n)\in M$, we denote the tangent space of $M$ at $\textbf{x}$ as $T_\textbf{x}M$ and a set of orthonormal vectors that span this tangent space as $\{\textbf{t}_i\}_{i=1}^d$. Then the projection matrix $\textbf{P}$ which projects vectors in $\mathbb{R}^n$ to $T_\textbf{x}M$ could be written as $\textbf{P}=\sum_{i=1}^d\textbf{t}_i\textbf{t}_i^\top$. Subsequently, the surface gradient on a smooth function $f:M\to \mathbb{R}$ evaluated at $\textbf{x}\in M$ in the Cartesian coordinates is given as,
$$
\textup{grad}_g f(\mathbf{x}):=\textbf{P}\overline{\textup{grad}}_{\mathbb{R}^n}f(\mathbf{x})=\left(\sum_{i=1}^d\textbf{t}_i\textbf{t}_i^\top\right)\overline{\textup{grad}}_{\mathbb{R}^n}f(\mathbf{x}),
$$
where $\overline{\textup{grad}}_{\mathbb{R}^n}=[\partial_{x^1}, \cdots, \partial_{x^n}]^\top$ is the usual gradient operator and the subscript $g$ is to associate the differential operator to the Riemannian metric $g$ induced by $M$ from $\mathbb{R}^n$. Let $\mathbf{e}^\ell,\ell=1,...,n$ be the standard orthonormal vectors  in $x^\ell$ direction in $\mathbb{R}^n$, we can rewrite above expression in component form as
$$
\textup{grad}_g f (\mathbf{x}):=\left[\begin{array}{c}\left(\mathbf{e}^{1} \cdot \mathbf{P}\right) \overline{\textup{grad}}_{\mathbb{R}^n} f(\mathbf{x}) \\ \vdots\\ \left(\mathbf{e}^{n} \cdot \mathbf{P}\right) \overline{\textup{grad}}_{\mathbb{R}^n}f(\mathbf{x}) \end{array}\right]
=\left[\begin{array}{c}\mathbf{P}^{1} \cdot \overline{\textup{grad}}_{\mathbb{R}^n}f(\mathbf{x}) \\ \vdots \\ \mathbf{P}^{n} \cdot \overline{\textup{grad}}_{\mathbb{R}^n} f(\mathbf{x})\end{array}\right]:=\left[\begin{array}{c}\mathcal{G}_{1}f(\mathbf{x}) \\ \vdots\\ \mathcal{G}_{n} f(\mathbf{x})\end{array}\right],
$$
where $\textbf{P}^\ell$ is the $\ell-$th row of the projection matrix $\textbf{P}$. 

Then we can approximate any differential operators (related to gradient and divergence) of functions $f$ by differentiating the RBF interpolant. That is, for $\ell=1,...,n$, and $\mathbf{x}_i \in X$,
$$
\mathcal{G}_\ell f(\textbf{x}_i)\approx (\mathcal{G}_\ell I_{\phi_s}\mathbf{f})(\textbf{x}_i)=\sum_{j=1}^Nc_j\mathcal{G}_\ell\phi_s(\Vert \textbf{x}_i-\textbf{x}_j\Vert),
$$
which can be written in matrix form as,
\BEA
\underbrace{\left[\begin{array}{c}\mathcal{G}_\ell f\left(\mathbf{x}_{1}\right) \\ \vdots \\ \mathcal{G}_\ell f\left(\mathbf{x}_{N}\right)\end{array}\right]}_{\mathcal{G}^\ell \mathbf{f}}\approx\mathbf{B}_\ell\underbrace{\left[\begin{array}{c}c_{1} \\ \vdots \\ c_{N}\end{array}\right]}_{\mathbf{c}}=\mathbf{B}_\ell \boldsymbol{\Phi}^{-1}\mathbf{f},\label{G_ell}
\EEA
where $[B_\ell]_{i,j}=\mathcal{G}_\ell\phi_s(\Vert \textbf{x}_i-\textbf{x}_j\Vert)$ and we have used $\boldsymbol{\Phi}$ as defined in (\ref{RBF-inter}) in the last equality. Hence, the differential matrix for the operator $\mathcal{G}_\ell$ is given by
\BEA
\mathbf{G}_\ell:=\mathbf{B}_\ell \boldsymbol{\Phi}^{-1}.
\label{diff-op}
\EEA
Then the operator $\mathcal{L}^\kappa =-\text{div}_g(\kappa\textup{grad}_g \,)$ could be approximated by 
\BEA
\mathbf{L}^{RBF}=-\sum_{\ell=1}^nG_\ell(\kappa G_\ell).
\EEA

\subsection{Review of GMLS approach}\label{App:A3}
In this section, we review the Generalized Moving Least-Squares (GMLS) method for approximating the surface operators. Indeed, there are two different ways to employ the GMLS approach to estimate operators on manifolds: using intrinsic differential geometry as in  \cite{liang2013solving,gross2020meshfree} and using extrinsic differential geometry as in \cite{suchde2019meshfree,jiang2024generalized}. For convenience of discussion, we briefly introduce the second one which will use the extrinsic formulation of differential operators as in Section~\ref{AppA:A2}.

For an arbitrary point ${\textbf{x}_0}\in X\subset M$, we denote its $K-$nearest neighbors in $X$ by $S_{{\textbf{x}_0}}=\{{\textbf{x}_{0,k}}\}_{k=1}^K\subset X$. Denote $\textbf{f}_{{\textbf{x}_0}}=(f({\textbf{x}_{0,1}}),...,f(\textbf{x}_{0,K}))^\top$. First, let $\mathbb{P}_{\textbf{x}_0}^{l,d}$ be the space of polynomials with degree up to $l$ in $d$ variables at the point ${\textbf{x}_0}$, i.e., $\mathbb{P}_{\textbf{x}_0}^{l,d}=\text{span}(\{p_{{\textbf{x}_0},\alpha}\}_{|\alpha|\leq l})$, where  $\alpha=(\alpha_1,..,\alpha_d)$ is the multi-index notation and $p_{{\textbf{x}_0},\alpha}$ is the basis polynomial functions defined as
$$
 p_{{\textbf{x}_0},\alpha}(\textbf{x})= \textbf{z}^{\alpha} =  \prod_{i=1}^d (z^i)^{\alpha_i}= \prod_{i=1}^d \left[ \textbf{t}_{{\textbf{x}_0},i}\cdot(\textbf{x}-{\textbf{x}_0})\right]^{\alpha_i},\quad |\alpha|\leq l.
$$
Here, $ \textbf{t}_{{\textbf{x}_0},i}$ is the $i$th tangent vector at ${\textbf{x}_0}$ for $i=1,...,d$. By definition, the dimension of the space  $\mathbb{P}_{\textbf{x}_0}^{l,d}$ is $m=\left(\begin{matrix}l+d\\ d\end{matrix}\right)$. For $K>m$, we can define an operator $\mathcal{I}_{\mathbb{P}}:\textbf{f}_{{\textbf{x}_0}}\in\mathbb{R}^K\rightarrow \mathcal{I}_{\mathbb{P}}\textbf{f}_{{\textbf{x}_0}}\in \mathbb{P}_{\textbf{x}_0}^{l,d}$ such that $\mathcal{I}_{\mathbb{P}}\textbf{f}_{{\textbf{x}_0}}$ is the optimal solution of the following least-squares problem:
 \BEA
\underset{q\in \mathbb{P}_{\textbf{x}_0}^{l,d}}{\operatorname{min}}\sum_{k=1}^K\left(f({\textbf{x}_{0,k}})-q ({\textbf{x}_{0,k}})\right)^2.
 \label{eqn:int_LS}
 \EEA
The solution to the least-squares problem (\ref{eqn:int_LS}) can be represented as $\mathcal{I}_{\mathbb{P}}\textbf{f}_{{\textbf{x}_0}}=\sum_{|\alpha|\leq l}b_\alpha p_{{\textbf{x}_0},\alpha}$, where the concatenated coefficients $\textbf{b}=(b_{\alpha(1)},...,b_{\alpha(m)})^\top$ satisfy the normal equation,
 \BEA
(\boldsymbol{\Phi}^\top\boldsymbol{\Phi})\textbf{b}=\boldsymbol{\Phi}^\top\textbf{f}_{{\textbf{x}_0}},
 \label{eqn:inter}
 \EEA
where  \BEA
 \boldsymbol{\Phi}_{kj}=p_{{\textbf{x}_0},\alpha(j)}({\textbf{x}_{0,k}}),\quad 1\leq k\leq K,\ 1\leq j\leq m.
 \label{eqn:matrix_phi}
 \EEA
 Using the notations defined in Section \ref{AppA:A2}, we can approximate the differential operator,
$$
\mathcal{G}_\ell f({\textbf{x}_{0,k}})\approx (\mathcal{G}_\ell \mathcal{I}_{\mathbb{P}}\textbf{f}_{{\textbf{x}_0}})({\textbf{x}_{0,k}})=\sum_{|\alpha|\leq l}b_{\alpha}\mathcal{G}_\ell p_{{\textbf{x}_0},\alpha}({\textbf{x}_{0,k}}),\quad \forall k=1,...,K, \ \ell=1,...,n.
$$
For each $\ell$, the above relation can also be written in matrix form as,
\BEA
  \left[\begin{array}{c}\mathcal{G}_\ell f({\mathbf{x}_{0,1}}) \\ \vdots \\ \mathcal{G}_\ell f({\mathbf{x}_{0,K}})\end{array}\right]
 \approx
\underbrace{\left[\begin{array}{ccc}\mathcal{G}_\ell p_{\mathbf{x}_0,\alpha(1)}(\mathbf{x}_{0,1}) & \cdots & \mathcal{G}_\ell p_{\mathbf{x}_0,\alpha(m)}(\mathbf{x}_{0,1}) \\ \vdots & \ddots & \vdots \\ \mathcal{G}_\ell p_{\mathbf{x}_0,\alpha(1)}(\mathbf{x}_{0,K}) & \cdots  & \mathcal{G}_\ell p_{\mathbf{x}_0,\alpha(m)}(\mathbf{x}_{0,K}) \end{array}\right] }_{\mathbf{B}_\ell}
\underbrace{\left[\begin{array}{c}b_{\alpha(1)} \\ \vdots \\ b_{\alpha(m)}\end{array}\right]}_{\mathbf{b}}
=\mathbf{B}_\ell(\boldsymbol{\Phi}^\top \boldsymbol{\Phi})^{-1}\boldsymbol{\Phi}^\top\mathbf{f}_{{\textbf{x}_0}},\label{eqn:G_ell} 
\EEA
where $\mathbf{B}_\ell$ is a $K$ by $m$ matrix with $[\mathbf{B}_\ell]_{ij}=\mathcal{G}_\ell p_{{\textbf{x}_0},\alpha(j)}({\textbf{x}_{0,i}})$ and we have used $\boldsymbol{\Phi}$ as defined in (\ref{eqn:matrix_phi}) in the last equality. Hence, the differential matrix for the operator $\mathcal{G}_\ell$ over the stencil is approximated by the $K$ by $K$ matrix,
\BEA
\mathbf{G}_\ell:=\mathbf{B}_\ell(\boldsymbol{\Phi}^\top \boldsymbol{\Phi})^{-1}\boldsymbol{\Phi}^\top.
\label{eqn:diff-op}
\EEA
Then the Laplace-Beltrami operator can be approximated at  the base point ${\mathbf{x}_0}$ as,
\BEA
\Delta_Mf ({\mathbf{x}_0})=\sum_{\ell=1}^n\mathcal{G}_\ell\mathcal{G}_\ell f ({\mathbf{x}_0})\approx \sum_{\ell=1}^n\mathcal{G}_\ell \mathcal{I}( \mathbf{G}_\ell\textbf{f}_{{\textbf{x}_0}})({\mathbf{x}_0})\approx  \big(\sum_{\ell=1}^n \mathbf{G}_\ell \mathbf{G}_\ell \textbf{f}_{{\textbf{x}_0}}\big)_1, 
\EEA
where subscript$-1$ is to denote the first element of the resulting $K$-dimensional vector. Denoting the elements of the first row of the $K$ by $K$ matrix $ \sum_{\ell=1}^n \mathbf{G}_\ell \mathbf{G}_\ell$ by $\{w_k\}_{k=1}^K$, we obtain a finite difference type approximation, that is,
\BEA
\Delta_Mf ({\mathbf{x}_0}) \approx  \sum_{k=1}^Kw_kf({\textbf{x}_{0,k}}).
\EEA
Arranging the weights at each point into each row of a sparse $N$ by $N$ matrix $\Delta^{GMLS}$, we can approximate the operator over all points by $\Delta^{GMLS}\mathbf{f}$. This GMLS Laplacian can be stabilized by employing some optimization procedures \cite%
{suchde2019meshfree,gross2020meshfree,jiang2024generalized}. In particular, one can use the following linear optimization problem \cite{jiang2024generalized}:
\BEA
\min\limits_{C,\hat{w}_1,...,\hat{w}_k}C
\EEA
with constraints
\BEA
\begin{cases}
\sum_{k=1}^K\hat{w}_kp_{\textbf{x}_0,\alpha}(\textbf{x}_{0,k})= \sum_{k=1}^Kw_kp_{\textbf{x}_0,\alpha}(\textbf{x}_{0,k}),\ \ |\alpha|\leq l,\\
\hat{w}_1<0,\\
\hat{w}_k+C\geq 0,k=2,...,K,\\
0\leq C\leq \big\vert\min\limits_{k=2,...,K}w_k\big\vert,
\end{cases}
\EEA
where the last constraint is added to guarantee an existence of the solution when $w_1<0$. In this paper, we use the same notation $\Delta^{GMLS}$ to denote the stabilized discrete operator consisting of the weights $\{\hat{w}_k\}_{k=1}^K.$
Consequently,  $\mathcal{L}^\kappa =-\text{div}_g(\kappa\textup{grad}_g \,)$ could be approximated by 
$$
\mathbf{L}^{GMLS}=-\sum_{\ell=1}^n(\mathbf{G}_\ell\kappa)\mathbf{G}_\ell-\kappa \Delta^{GMLS}.
$$


\end{document}